\newcommand{\N}{\mathbb{N}}
\newcommand{\R}{\mathbb{R}}
\newcommand{\ord}{\mbox{\rm ord}}
\newcommand{\ini}{\mbox{\rm in}}
\newcommand{\Ini}{\mbox{\sc{I}\rm n}}
\newcommand{\bideg}{\mbox{\rm bideg}}
\newcommand{\supp}{\mathrm{supp}\,}
\newlength{\szer}
\newtheorem{defi}{Definition}[section]
\newtheorem{nota}[defi]{Remark}
\newtheorem{teorema}[defi]{Theorem}
\newtheorem{prop}[defi]{Proposition}
\newtheorem{lema}[defi]{Lemma}
\newtheorem{coro}[defi]{Corollary}
\newenvironment{proof}[1][Proof]{\textbf{#1.} }{\
\rule{0.5em}{0.5em}}
\begin{document}
\title{A note on the plane curve singularities in positive characteristic
\footnotetext{
     \noindent   \begin{minipage}[t]{4in}
       {\small
       2010 {\it Mathematics Subject Classification:\/} Primary 14H20;
       Secondary 32S05.\\
       Key words and phrases:  Milnor number, Newton polygon, non-degeneracy.\\
       The first-named author was partially supported by the Spanish grant
       PID2019-105896GB-I00 funded by MCIN/AEI/10.13039/501100011033.}
       \end{minipage}}}

\author{Evelia R.\ Garc\'{\i}a Barroso and Arkadiusz P\l oski}

\maketitle

\begin{abstract} Given an algebroid plane curve $f=0$ over  an algebraically closed field  of characteristic $p\geq 0$ we consider the Milnor number $\mu(f)$,  the delta invariant $\delta(f)$á and the number $r(f)$ of its irreducible components. Put $\bar \mu(f)=2\delta(f)-r(f)+1$. If $p=0$ then $\bar \mu (f)=\mu(f)$ (the Milnor formula). If $p>0$ then $\mu(f)$ is not an invariant and $\bar \mu(f)$ plays the role of $\mu(f)$. Let $\mathcal N_f$ be the Newton polygon of $f$. We define the numbers $\mu(\mathcal N_{f})$ and $r(\mathcal N_{f})$ which can be computed by explicit formulas.
The aim of this note is to give a simple proof of the inequality
$\bar \mu(f)-\mu(\mathcal N_{f})\geq r(\mathcal N_{f})- r(f)\geq 0$ due to Boubakri, Greuel and Markwig.
We also prove that $\bar \mu(f)=\mu(\mathcal N_{f})$ when $f$ is non-degenerate.
\end{abstract}

\section*{Introduction}
The main objective of this note is to give a new and simple proof of \cite[Proposition 7]{BGreuel}.The paper is organised as follows. The Section \ref{requisitos} is a survey of prerequisites from the theory of algebroid curves (see \cite{Pfister-Ploski}). We define the invariant $\bar \mu$ which plays the crucial role in this note. In Section \ref{Newtonp} we use the Newton polygons that are vital to the proof of the main result. In Section \ref{sec:main} we make use of the Newton polygon $\mathcal N_f$, associated with a formal power series $f\in K[[x,y]]$ ($K$ is an algebraically closed field of  arbitrary characteristic), to compute the invariant $\bar \mu (f)$ and the number $r(f)$ of irreducible components of the curve $f(x,y)=0$. We define the numbers $\mu(\mathcal N_{f})$ and 
$r(\mathcal N_{f})$ which are combinatorial counterparts of $\bar \mu (f)$  and $r(f)$. Suppose that $f$ is a reduced power series. We give a new proof of \cite[Proposition 7]{BGreuel} which states
\begin{equation}\label{eqBG}
\bar \mu(f)-\mu(\mathcal N_{f})\geq r(\mathcal N_{f})- r(f)\geq 0.
\end{equation}

On the other hand, under the assumption of non-degeneracy introduced by Beelen and Pellikaan
\cite[Definition 3.14]{Be-Pe} we prove that
\begin{equation}
\label{eqBP}
\bar \mu(f)=\mu(\mathcal N_{f}) \;\;\; \hbox{\rm and }\;\;\; 
r(f)=r(\mathcal N_{f}).
\end{equation}

The inequality \eqref{eqBG} generalizes \cite[Theorem 1.2]{Ploski} where  \eqref{eqBG} is proved when the characteristic of the field $K$ is zero and $f$ is convenient. Section \ref{proof} is devoted to the proofs of \eqref{eqBG} and \eqref{eqBP}.

\section{Prerequisites}
\label{requisitos}

Let $K$ be an algebraically closed field of arbitrary characteristic. Let $f\in K[[x,y]]$ be a non-zero power series without constant term. The power series $f$ is reduced if it has not multiple factors.

In what follows we consider the equisingularity invariants of a reduced plane curve $\{f(x,y)=0\}$ (see \cite{Pfister-Ploski}): $r(f)$ is the number of irreducible factors of $f$, $c(f)=\dim_{K} \overline{\cal O}_f/{\cal C}$ is the {\it degree of the conductor}, where ${\cal O}_f=K[[x,y]]/(f)$, $ \overline{{\cal O}_f}$  is the integral closure of  ${\cal O}_f$ in the total quotient ring of ${\cal O}_f$ and ${\cal C}$ is the \emph{conductor} of ${\cal O}_f$, that is the largest ideal in ${\cal O}_f$  which remains an ideal in $ \overline{{\cal O}_f}$. Finally the {\it delta invariant} of $f$ is 
$\delta (f)=\dim_{K} \overline{\cal O}_f/{\cal O}_f$.  Since ${\cal O}_f$ is Gorenstein we get $c(f)=2\delta(f)$. 

 If $f\in K[[x,y]]$ is irreducible then the {\it semigroup of values} of $f(x,y)$, denoted by $\Gamma(f)$, is  defined as the set of intersection multiplicities $i_0(f,g)=\dim_K K[[x, y]]/(f,g)$ where $g$ runs over all power series in $K[[x,y]]$
such that $g\not\equiv 0$ (mod $f$). This semigroup is numerical, that is $\N\backslash \Gamma(f)$ is a finite set. Denote by $c$ the {\it conductor} of  $\Gamma(f)$, that is, the smallest element of $\Gamma(f)$ such that $c+N\in \Gamma(f)$ for any nonnegative integer $N$. The semigroup $\Gamma(f)$ admits a minimal system of generators $v_0<v_1<\cdots<v_g$ such that $\gcd(v_0,\ldots,v_g)=1$. We write $\Gamma(f)=\langle v_0,\ldots,v_g\rangle$. Put $e_{i}:=\gcd(v_0,\ldots,v_i)$ for $0\leq i\leq h$ and $n_{i}=\frac{e_{i-1}}{e_{i}}$ for $1\leq i \leq g$. 

If $f$ is irreducible then $c(f)$ equals to the conductor $c$ of the semigroup $\Gamma(f)$. Consequently
$c(f)=\sum_{k=1}^{g}(n_{k}-1)v_{k}-v_{0}+1$.\\

Let $ \mu(f)$ be the {\em Milnor number} of $f$ defined as the codimension of the ideal generated by
$\frac{\partial f}{\partial x},\frac{\partial f}{\partial y},$ that is $\mu(f)=i_0\left(\frac{\partial f}{\partial x},\frac{\partial f}{\partial y}\right)$. The {\em invariant Milnor number } of $f$  is defined to be 
$\bar \mu(f)=2\delta(f)-r(f)+1=c(f)-r(f)+1$ (see\cite{GB-P2018}).  If $p=0$ then $\bar \mu (f)=\mu(f)$ (the Milnor formula). If $p>0$ $\mu(f)$ is not an invariant and $\bar \mu(f)$ plays the role of $\mu(f)$.
Melle and Wall \cite{Melle-Wall}, based on a result of Deligne \cite{Deligne}, proved that $\mu(f) \geq \bar \mu (f)$.\\

Any plane reduced curve $\{f(x,y)=0\}$ is called a {\it tame singularity} if $\mu(f)=\bar \mu (f)$. If the characteristic of $K$ is zero any singularity of plane reduced curve is tame.\\

\begin{prop} $\;$\label{prop:1}
\begin{enumerate}
\item For any unit $u\in K[[x,y]]$ we get $\bar \mu(uf)=\bar \mu(f)$.
\item For every reduced power series $f\in K[[x,y]]$  we have $\bar \mu(f)\geq 0$  and $\bar \mu(f)=0$ if and only if $\ord f=1$. 
\item Let $f=g_{1}\cdots g_{s}$ be a reduced power series where $g_{i}\in K[[x,y]]$ are pairwise coprime. Then 
\[
\bar \mu(f)+s-1=\sum_{i=1}^{s} \bar \mu(g_{i})+2\sum_{1\leq i<j\leq s}i_{0}(g_{i},g_{j}).
\]
\end{enumerate}
\end{prop}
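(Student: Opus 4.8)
The plan is to handle the three items in the order (1), (3), (2), since (2) is most naturally derived from (3). Item (1) is immediate: if $u$ is a unit of $K[[x,y]]$ then $(uf)=(f)$ as ideals, so $\mathcal{O}_{uf}=\mathcal{O}_f$ as rings; hence $\delta(uf)=\delta(f)$ and $r(uf)=r(f)$, and therefore $\bar\mu(uf)=2\delta(uf)-r(uf)+1=\bar\mu(f)$.

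For (3) the key input is the classical additivity of the delta invariant: if $g,h\in K[[x,y]]$ are coprime then $\delta(gh)=\delta(g)+\delta(h)+i_0(g,h)$ (see \cite{Pfister-Ploski}), which reflects the splitting of the normalization of $\mathcal{O}_{gh}$ as the product of the normalizations of $\mathcal{O}_g$ and $\mathcal{O}_h$. Since the $g_i$ are pairwise coprime and $K[[x,y]]$ is a unique factorization domain, $g_1\cdots g_{k-1}$ is coprime to $g_k$ for each $k$, so an induction on $s$, using also the obvious additivity $r(gh)=r(g)+r(h)$ and the bilinearity $i_0(g_1g_2,h)=i_0(g_1,h)+i_0(g_2,h)$ of the intersection multiplicity, yields $\delta(f)=\sum_i\delta(g_i)+\sum_{i<j}i_0(g_i,g_j)$ and $r(f)=\sum_i r(g_i)$. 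Substituting these into $\bar\mu(f)=2\delta(f)-r(f)+1$ and comparing with $\sum_i\bar\mu(g_i)=\sum_i\bigl(2\delta(g_i)-r(g_i)+1\bigr)=2\sum_i\delta(g_i)-\sum_i r(g_i)+s$ gives, after collecting terms, the stated identity $\bar\mu(f)+s-1=\sum_i\bar\mu(g_i)+2\sum_{i<j}i_0(g_i,g_j)$.

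For (2) I would first treat the irreducible case. If $f$ is irreducible then $\bar\mu(f)=c(f)-r(f)+1=c$, the conductor of the numerical semigroup $\Gamma(f)$, so $\bar\mu(f)\geq 0$ is automatic; moreover $c=0$ holds exactly when $\Gamma(f)=\N$, that is, when the smallest nonzero element $v_0=\ord f$ of $\Gamma(f)$ equals $1$. For general reduced $f$, write $f=g_1\cdots g_r$ with the $g_i$ irreducible and $r=r(f)$; since $i_0(g_i,g_j)\geq 1$ for $i\neq j$, item (3) gives $\bar\mu(f)=\sum_i\bar\mu(g_i)+2\sum_{i<j}i_0(g_i,g_j)-(r-1)\geq 2\binom{r}{2}-(r-1)=(r-1)^2\geq 0$. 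If $\bar\mu(f)=0$ then $r=1$ and $\bar\mu(g_1)=0$, whence $\ord f=1$ by the irreducible case; conversely a power series of order $1$ is, after Weierstrass preparation, a unit times a coordinate, hence smooth and irreducible with $\delta(f)=0$ and $r(f)=1$, so $\bar\mu(f)=0$.

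The argument is mostly bookkeeping: the only genuinely nontrivial ingredients are the delta-additivity formula and the identification $c(f)=c$ in the irreducible case, both of which I would quote from the prerequisites in \cite{Pfister-Ploski} rather than reprove, so I do not expect any serious obstacle.
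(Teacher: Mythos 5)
Your proposal is correct, and it is in fact more than the paper offers: for this proposition the paper gives no argument at all, simply citing \cite[Proposition 1.2, Remark 2.2]{GB-P2015}, so there is no in-paper proof to diverge from. The ingredients you use are exactly the standard facts underlying that reference: for (1) the equality of ideals $(uf)=(f)$, hence of $\mathcal O_{uf}$ and $\mathcal O_f$; for (3) the additivity $\delta(gh)=\delta(g)+\delta(h)+i_0(g,h)$ for coprime factors together with $r(gh)=r(g)+r(h)$ and additivity of $i_0$ in each argument, after which the identity is pure bookkeeping (your computation checks out); for (2) the identification $\bar\mu(f)=c(f)=c$ in the irreducible case, with $c=0$ if and only if $\Gamma(f)=\N$, i.e.\ $v_0=\ord f=1$, and in the reducible case the bound $\bar\mu(f)\geq (r-1)^2$ coming from $i_0(g_i,g_j)\geq 1$, which forces $r=1$ when $\bar\mu(f)=0$. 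All of these hold in arbitrary characteristic and are quoted from \cite{Pfister-Ploski} in the paper's own prerequisites, so relying on them is legitimate; the only point worth making explicit is that $v_0=\ord f$ (the minimal nonzero value of the semigroup is attained by a generic line), which is standard and characteristic-free.
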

\begin{proof}
See \cite[Proposition 1.2, Remark 2.2]{GB-P2015}.
\end{proof}

If the characteristic of $K$ is positive then, in general, we have $\mu(uf)\neq \mu(f)$ (see \cite[page 63]{BGreuel}).

\section{Newton polygons and plane curve singularities}
\label{Newtonp}

A segment $S\subset \mathbb R^2$ is a {\it Newton edge} if its vertices $(\alpha,\beta)$, $(\alpha',\beta')$ lie in
$\mathbb N^2$ and $\alpha<\alpha'$, $\beta'<\beta$. Put $\vert S\vert_1=\alpha'-\alpha$, $\vert S\vert_2=\beta-\beta'$, $r(S)=\gcd(\vert S\vert_1,\vert S\vert_2)$  If $S,T$ are two Newton edges we define $[S,T]:=\min \{\vert S\vert_1\vert T\vert_2,\vert S\vert_2\vert T\vert_1\}$. If   $\frac{\vert S\vert_1}{\vert S\vert_2}<\frac{\vert T\vert_1}{\vert T\vert_2}$ then $[S,T]=\vert S\vert_1\vert T\vert_2$ (see Figure \ref{fig:[S,T]}).\\

\begin{figure}[h!] 
\begin{center}
\begin{tikzpicture}[x=0.5cm,y=0.5cm] 
\tikzstyle{every node}=[font=\small]
\fill[fill=yellow!40!white] (0,2) --(0,0)-- (3,0) --(3,2)-- cycle;
 \node at (1.5,1) {$\vert S\vert_1\vert T\vert_2$}; 
\node[draw,circle,inner sep=1.4pt,fill, color=black] at (0,4){};
\node[draw,circle,inner sep=1.4pt,fill, color=black] at (10,0){};
\node[draw,circle,inner sep=1.4pt,fill, color=black] at (3,2){};
\node [above] at (2,3) {$S$};
\node [above] at (7,1) {$T$};
\draw[->] (0,0) -- (11,0) node[right,below] {$\alpha$};
\draw[->] (0,0) -- (0,5) node[above,left] {$\beta$};
\draw[-, line width=0.5mm] (0,4) -- (3,2);
\draw[-, line width=0.5mm] (3,2) -- (10,0);
\draw[<->] (0,-0.8) to [bend right] (3,-0.8);
\draw[-] (1.5,-1.3) node[below] {$\vert S\vert_1$};
\draw[<->] (3.1,-0.8) to [bend right] (10,-0.8);
\draw[-] (6.5,-1.7) node[below] {$\vert T\vert_1$};
\draw[<->] (-0.5,4) to [bend right] (-0.5,2);
\draw[-] (-1.8,3.5) node[right,below] {$\vert S\vert_2$};
\draw[<->] (-0.5,1.9) to [bend right] (-0.5,0);
\draw[-] (-1.8,1.5) node[right,below] {$\vert T\vert_2$};
\end{tikzpicture}
\end{center}
\caption{$[S,T]=\vert S\vert_1\vert T\vert_2$}  
   \label{fig:[S,T]}
    \end{figure}

Let $K$ be an algebraically closed field of characteristic $p\geq 0$. Consider $f\in K[[x,y]]$  a  nonzero power series without constant term. Write $f=\sum_{\alpha,\beta}c_{\alpha\beta}x^{\alpha}y^{\beta}$. The {\it support} of $f$ is $\supp f=\{(\alpha,\beta)\in \N^2\;:\; c_{\alpha\beta}\neq 0\}$. The {\it Newton diagram} $\Delta(f)$ of $f$ is the convex hull
of  $\supp f+(\R_{\geq 0})^2$. The {\it Newton polygon} $\mathcal N_f$ of $f$ is  the set of compact faces of the boundary of $\Delta(f)$. We put $\vert \mathcal N_f \vert_1=\sum_{S\in \mathcal N_f}\vert S\vert_1$, $\vert \mathcal N_f \vert_2=\sum_{S\in \mathcal N_f}\vert S\vert_2$, $[\mathcal N_f,\mathcal N_f]=\sum_{S,T\in \mathcal N_f}[S,T]$ and $r(\mathcal N_f)=\sum_{S\in \mathcal N_f}r(S)+k+l$, where $k,l$ are maximal such that $x^{k}y^{l}$ divides $f$.\\

A power series $f\in K[[x,y]]$ is {\it convenient} if $f(x,0)f(0,y)\neq 0$; otherwise we will say that $f$ is {\em non-convenient}. When $f$ is convenient the curve $f(x,y)=0$ does not contain 
the axes. Hence there is  an edge $F\in \mathcal N_f$ with the vertex $(m,0)$, where $m=\ord f(x,0)$  and there is 
$E\in \mathcal N_f$ with the vertex $(0,n)$ where $n=\ord f(0,y)$. The edges $F$ and $E$ are not necessarily different.\\

Let $f(x,y)=\sum_{\alpha, \beta}c_{\alpha\beta}x^{\alpha}y^{\beta}\in K[[x,y]]$. Recall that the {\it order} of $f$ is $\ord f=\min\{\alpha+\beta\;:\;c_{\alpha\beta}\neq 0\}$ and the {\it initial part} of $f$ is $\ini f=\sum_{\alpha+ \beta=\ord f}c_{\alpha\beta}x^{\alpha}y^{\beta}$. \\

For any segment $S\in \mathcal N_f$ we put $\ini(f,S)=\sum_{(\alpha, \beta)\in S}c_{\alpha\beta}x^{\alpha}y^{\beta}$. Let $x^{\alpha(S)}y^{\beta(S)}$ be the monomial of highest degree dividing $\ini(f,S)$. Then $\ini(f,S)=x^{\alpha(S)}y^{\beta(S)}\overline \ini(f,S)$ where $\overline \ini(f,S)$ is a convenient power series. We say that $f$ is {\it non-degenerate} if $\overline \ini(f,S)$ is reduced for every $S\in \mathcal N_f$, that is it does not have multiple factors.

\begin{nota}
A power series $f$ is non-degenerate if and only if for any segment $S\in \mathcal N_f$
 the solutions of the system 
\[\left\{\begin{array}{l}
\frac{\partial }{\partial x}\ini(f,S)=0\\
\frac{\partial }{\partial y}\ini(f,S)=0\\
\ini(f,S)=0
\end{array}
\right.
\]
are contained in $\{xy=0\}$ (see \cite[Proposition 3.5]{G-N}. On the other hand $f$ is non-degenerate in the strong sense (Kouchnirenko \cite{Kouchnirenko}) if the solutions of the system 
\[\left\{\begin{array}{l}
\frac{\partial }{\partial x}\ini(f,S)=0\\
\frac{\partial }{\partial y}\ini(f,S)=0\\
\end{array}
\right.
\]
are contained in $\{xy=0\}$ for any segment $S\in \mathcal N_f$. In zero characteristic both definitions are equivalent (see \cite[Remark 3.15]{Be-Pe}).
Nevertheless  if the characteristic of $K$ is $p>0$ then the power series $f(x,y)=x^{p}+y^{p+1}$  is non-degenerate but it is not non-degenerate in the strong sense.\\
\end{nota}

Assume that $f$ is a convenient power series. Recall that $m=\ord f(x,0)$ and $n=\ord f(0,y)$.
We put 

\begin{equation}\label{convenient}
\mu(\mathcal N_f)=[\mathcal N_f,\mathcal N_f]-\vert \mathcal N_f \vert_1-\vert \mathcal N_f \vert_2+1
\end{equation}

 and \[
 \delta(\mathcal N_f)=\frac{1}{2}\left(\mu(\mathcal N_f)+r(\mathcal N_f)-1\right).
 \]

Note that 

\begin{itemize}
\item $\mu(\mathcal N_f)=2 \hbox{\rm(area of the polygon bounded by $\mathcal N_f$ and the axes)}-n-m+1$, which is called the {\em Newton number} of $f$.
\item $r(\mathcal N_f)=\hbox{\rm(number of integer points on } \mathcal N_f)-1$, and 
\item $\delta(\mathcal N_f)=$ number of integer points lying below $\mathcal N_f$ but not on the axes. This is a consequence of Pick's formula.
\end{itemize}

If $f$ is a reduced power series (not necessarily convenient) then we define:

\begin{equation}\label{mueverybody}
\mu(\mathcal N_f)=\sup_{m\in \mathbb N}\{\mu(\mathcal N_{f_{m}})\;:\;f_{m}=f+x^{m}+y^{m}\}.
\end{equation}

\noindent Like in the case of convenient power series we put 
\begin{equation}\label{eq:delta}
 \delta(\mathcal N_f)=\frac{1}{2}\left(\mu(\mathcal N_f)+r(\mathcal N_f)-1\right)
 \end{equation}
for any reduced power series.

\noindent Observe that if $f$ is convenient then the two definitions of $\mu(\mathcal N_f)$, \eqref{convenient} and \eqref{mueverybody}, coincide.

\noindent Let $f\in K[[x,y]]$ be a reduced power series and let $x^{d_{1}}y^{d_{2}}$ be the monomial of highest degree dividing $f$. We have $f=x^{d_{1}}y^{d_{2}}g$ where $g\in K[[x,y]]$ is a convenient power series or a unit. Since $f$ is reduced $d_{1}, d_{2}\leq 1$ and $(d_{1},d_{2})=(0,0)$ if and only if $f$ is convenient. We have $[\mathcal N_f,\mathcal N_f]=2(\hbox{\rm the area between  $\mathcal N_f$ and the lines $x-d_{1}=0$, $y-d_{2}=0)$.}$\\

\noindent The following nice formula is due to Lenarcik:

\begin{lema}(\cite[Proposition 61]{Lenarcik})
\label{Lenarcik1}
Let $f$ be a reduced power series of order bigger than one. Let $A_{1}$ be the area limited by $\mathcal N_f$  and the lines $x-1=0$ and $y-1=0$. If $(m_{1},1), (1,n_{1})\in  \mathcal N_f$ then $\mu(\mathcal N_f)
=2 A_{1}+m_{1}+n_{1}-1.$
\end{lema}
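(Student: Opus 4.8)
The plan is to reduce to the convenient case by means of the family $f_m=f+x^m+y^m$ and then to compute the relevant area by cutting the region below $\mathcal N_{f_m}$ along the lines $x=1$ and $y=1$.

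By \eqref{mueverybody}, $\mu(\mathcal N_f)=\sup_{m}\mu(\mathcal N_{f_m})$, so I would first argue that this supremum is attained for $m\gg 0$ and that it is enough to evaluate $\mu(\mathcal N_{f_m})$ there. For $m$ large the Newton diagrams form a decreasing chain $\Delta(f_{m+1})\subseteq\Delta(f_m)$, and the Newton number is non-increasing under enlargement of a convenient Newton diagram; hence $m\mapsto\mu(\mathcal N_{f_m})$ is eventually non-decreasing. (This is the same stabilization that makes \eqref{convenient} and \eqref{mueverybody} agree for convenient $f$; the finitely many values of $m$ at which a leading monomial of $f$ cancels against $x^m$ or $y^m$ may be discarded.) Having fixed a large $m$, so that $f_m$ is convenient, I would invoke the Newton--number formula recalled above to write
\[
\mu(\mathcal N_{f_m})=2A(f_m)-\ord f_m(x,0)-\ord f_m(0,y)+1,
\]
where $A(f_m)$ is the area of the region $R_m$ bounded by $\mathcal N_{f_m}$ and the two coordinate axes.

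Next I would describe $\mathcal N_{f_m}$ precisely. Adding $x^m+y^m$ alters the Newton polygon only near the axes, so the portion of $\mathcal N_{f_m}$ lying in $\{x\ge 1,\ y\ge 1\}$ coincides with the arc of $\mathcal N_f$ joining $(1,n_1)$ to $(m_1,1)$ --- this is where the hypothesis $(m_1,1),(1,n_1)\in\mathcal N_f$ is used. Below that arc $\mathcal N_{f_m}$ is a single segment from $(m_1,1)$ to $(\ord f_m(x,0),0)$, and to the left of it a single segment from $(1,n_1)$ to $(0,\ord f_m(0,y))$, since $\N^2$ contains no point whose second (respectively first) coordinate lies strictly between $0$ and $1$ and so $\mathcal N_{f_m}$ has no vertex in those strips. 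I would then cut $R_m$ by $x=1$ and $y=1$ into four regions. Since $\ord f>1$, for $m\gg 0$ the polygon lies above the unit square, so the part of $R_m$ in $\{x\le 1,\ y\le 1\}$ is $[0,1]^2$, of area $1$; the part in $\{x\ge 1,\ 0\le y\le 1\}$ is the (possibly degenerate) trapezoid with vertices $(1,0),(\ord f_m(x,0),0),(m_1,1),(1,1)$, of area $\tfrac12\bigl(\ord f_m(x,0)+m_1\bigr)-1$; symmetrically the part in $\{x\le 1,\ y\ge 1\}$ has area $\tfrac12\bigl(\ord f_m(0,y)+n_1\bigr)-1$; and the part in $\{x\ge 1,\ y\ge 1\}$ is bounded by the arc of $\mathcal N_f$ from $(1,n_1)$ to $(m_1,1)$ and the lines $x=1$, $y=1$, i.e.\ it is precisely the region of area $A_1$. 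Summing,
\[
A(f_m)=A_1-1+\tfrac12\bigl(\ord f_m(x,0)+\ord f_m(0,y)+m_1+n_1\bigr),
\]
and substituting into the previous display the two orders cancel, so $\mu(\mathcal N_{f_m})=2A_1+m_1+n_1-1$, independently of $m$, which is the assertion.

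The step I expect to cost the most effort is the geometric description of $\mathcal N_{f_m}$: one must check carefully that it really decomposes as a single bottom segment, the unchanged middle arc of $\mathcal N_f$, and a single left segment, and that the degenerate configurations fall under the same formulas --- for example $(1,1)\in\mathcal N_f$, where a trapezoid collapses to a triangle; or $f$ already convenient, where $\mathcal N_{f_m}=\mathcal N_f$ for large $m$; or the harmless cancellations mentioned above. Once this is in place, the area count and the cancellation of $\ord f_m(x,0)$ and $\ord f_m(0,y)$ are routine.
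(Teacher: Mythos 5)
Your argument is correct, and it is necessarily a different route from the paper's, because the paper does not prove this lemma at all: it is quoted from Lenarcik's Proposition 61 and used as a black box. What you supply is a self-contained proof, and the core of it --- cutting the region under $\mathcal N_{f_m}$ along $x=1$ and $y=1$ into the unit square, two trapezoids of areas $\tfrac12(\ord f_m(x,0)+m_1)-1$ and $\tfrac12(\ord f_m(0,y)+n_1)-1$, and the region of area $A_1$, with the two orders cancelling against the Newton-number formula --- is exactly the bookkeeping that the paper itself records separately in Lemma \ref{lema:A}; your observation that the polygon has no vertex in the strips $0<\alpha<1$ and $0<\beta<1$ correctly justifies that the outer pieces are single segments, including when $(m_1,1)$ or $(1,n_1)$ fails to be a lattice point.

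The one step you should not leave as an aside is the identification of the supremum in \eqref{mueverybody} with the stable value of $\mu(\mathcal N_{f_m})$ for $m\gg 0$. You invoke monotonicity of the Newton number under inclusion of convenient Newton diagrams; in the plane this is true, but it is a genuine theorem rather than a formal consequence of $\Delta(f_{m+1})\subseteq\Delta(f_m)$ (enlarging the diagram increases the area under the polygon \emph{and} the intercepts, so $2A-m-n+1$ is not obviously monotone), and it deserves a proof or a precise reference. To be fair, the paper needs the same fact for its unproved assertion that \eqref{convenient} and \eqref{mueverybody} agree when $f$ is convenient, so you are no worse off than the source; and your handling of the cancellation issue is sound, since a term of $f$ can cancel against $x^m$ or $y^m$ in a way that affects $\Delta(f_m)$ only for the finitely many $m$ not exceeding $\ord f(x,0)$ or $\ord f(0,y)$. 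With the monotonicity statement made explicit (or replaced by a direct computation of $\mu(\mathcal N_{f_m})$ for the remaining small $m$), the proof is complete.
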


\begin{lema}
\label{lema:A}
Let $A$ be the area between the Newton polygon of $f=x^{d_{1}}y^{d_{2}}g\in K[[x,y]]$ and the lines $x-d_1=0$ and $y-d_2=0$. Let $m=\ord f(x,0)$, $n=\ord f(0,y)$ (by convention $\ord 0=+\infty$). Then

\begin{equation}
 A= \left\{\begin{array} {ll}
\, A_1+\frac{m+m_1-1}{2}+\frac{n+n_1-1}{2},\;\; \vert \mathcal N_f \vert_1=m,\;\; \vert \mathcal N_f \vert_2=n
& \hbox{\rm if }  (d_{1},d_{2})=(0,0)\;\\
\, A_1+\frac{m_1+m-2}{2},\;\; \vert \mathcal N_f \vert_1=m-1,\;\; \vert \mathcal N_f \vert_2=n_1&\hbox{\rm if }  (d_{1},d_{2})=(1,0)\\
\, A_1+\frac{n_1+n-2}{2},\;\; \vert \mathcal N_f \vert_1=m_1,\;\; \vert \mathcal N_f \vert_2=n-1&\hbox{\rm if }  (d_{1},d_{2})=(0,1)\;\\
\, A_1,\;\; \vert \mathcal N_f \vert_1=m_1-1,\;\; \vert \mathcal N_f \vert_2=n_1-1 &\hbox{\rm if }  (d_{1},d_{2})=(1,1).\;\\
\end{array}
\right.
\end{equation}
\end{lema}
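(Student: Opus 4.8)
The plan is to read every quantity in the statement directly off the geometry of $\mathcal{N}_f$, exploiting the translation $\mathcal{N}_f=\mathcal{N}_g+(d_1,d_2)$ coming from $\supp f=\supp g+(d_1,d_2)$. Throughout I work under the standing hypothesis of Lemma \ref{Lenarcik1}, namely $\ord f>1$, so that the points $(m_1,1)$ and $(1,n_1)$ of $\mathcal{N}_f$ are well defined ($m_1$ being the abscissa of $\mathcal{N}_f$ at height $1$ and $n_1$ its ordinate at abscissa $1$; one of these may equal $1$, or be a non-integral rational in the convenient case). The proof splits into: (i) locating the two endpoints of the polygonal path $\mathcal{N}_f$, which yields $\vert\mathcal{N}_f\vert_1$ and $\vert\mathcal{N}_f\vert_2$; and (ii) comparing the regions whose areas are $A$ and $A_1$.

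For (i): the leftmost point of $\mathcal{N}_f$ has abscissa $d_1$; its ordinate is $n=\ord f(0,y)$ when $d_1=0$ (then $f(0,y)\neq 0$) and is $n_1$ when $d_1=1$. Symmetrically, the bottom-most point of $\mathcal{N}_f$ has ordinate $d_2$, and abscissa $m=\ord f(x,0)$ when $d_2=0$ and $m_1$ when $d_2=1$. Since $\vert\mathcal{N}_f\vert_1$ and $\vert\mathcal{N}_f\vert_2$ are the horizontal and vertical spans of the path, they equal (bottom abscissa)$-d_1$ and (left ordinate)$-d_2$ respectively; running through $(d_1,d_2)\in\{(0,0),(1,0),(0,1),(1,1)\}$ gives precisely the four asserted pairs for $(\vert\mathcal{N}_f\vert_1,\vert\mathcal{N}_f\vert_2)$.

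For (ii): both $A$ (region below $\mathcal{N}_f$ bounded by $x=d_1$ and $y=d_2$) and $A_1$ (same, bounded by $x=1$ and $y=1$) are subgraph-type regions under the same path, and since $d_1,d_2\le 1$ the $A$-region contains the $A_1$-region. By inclusion–exclusion, $A-A_1$ is the area in the vertical slab $d_1\le x\le 1$, plus the area in the horizontal slab $d_2\le y\le 1$, minus the overlapping corner $[d_1,1]\times[d_2,1]$. Because the vertices of $\mathcal{N}_f$ are lattice points, over a unit-width slab the path is a single segment, so each slab contribution is a trapezoid area: the vertical slab contributes $\tfrac{(n-d_2)+(n_1-d_2)}{2}$ if $d_1=0$ (and $0$ if $d_1=1$), and the horizontal slab contributes $\tfrac{(m-d_1)+(m_1-d_1)}{2}$ if $d_2=0$ (and $0$ if $d_2=1$). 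The corner is nonempty only when $d_1=d_2=0$, where it is the unit square $[0,1]^2$; the one genuinely geometric point — and the step I expect to be the main (though minor) obstacle — is that $[0,1]^2$ lies weakly below $\mathcal{N}_f$ whenever $\ord f>1$, so its contribution is exactly $1$. This follows by testing $(1,1)$ against the line supporting the edge of $\mathcal{N}_f$ through the leftmost vertex $(0,n)$ with $n\ge 2$.

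Assembling, $A-A_1=(\text{vertical slab})+(\text{horizontal slab})-(\text{overlap})$ evaluates in the four cases $(0,0),(1,0),(0,1),(1,1)$ to $\tfrac{m+m_1-1}{2}+\tfrac{n+n_1-1}{2}$, $\tfrac{m_1+m-2}{2}$, $\tfrac{n_1+n-2}{2}$, and $0$, which is the claim. The remaining care is purely bookkeeping: one checks that in every degenerate configuration — $g$ a unit (so $\mathcal{N}_f$ is a single point), or the bottom/left vertex of $\mathcal{N}_f$ having abscissa/ordinate equal to $1$ so that a "trapezoid" collapses to a triangle or a point — the trapezoid-area expressions above still return the correct value, so no separate treatment is needed.
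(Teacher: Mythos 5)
Your proof is correct. The paper itself gives no real argument here --- its ``proof'' is the single sentence that the lemma ``is a consequence of Lemma \ref{Lenarcik1}'', which at best signals that the statement is elementary planar geometry of the kind illustrated in the figures; your slab--plus--inclusion--exclusion decomposition of $A-A_1$ is exactly the bookkeeping that sentence leaves to the reader, and you carry it out correctly. The two points that genuinely need justification are the ones you isolate: that the polygon has no vertex strictly inside a unit-width slab (true because vertices are lattice points, so each slab contribution is a single trapezoid with the stated parallel sides), and that in the convenient case the unit square $[0,1]^2$ lies weakly below $\mathcal N_f$ (true because $\ord f\ge 2$ forces $\Delta(f)\subset\{x+y\ge 2\}$, hence the height of the polygon at $x=1$ is at least $1$). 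Your identification of the endpoints of $\mathcal N_f$ via $\supp f=\supp g+(d_1,d_2)$, giving the four pairs $(\vert\mathcal N_f\vert_1,\vert\mathcal N_f\vert_2)$, is likewise correct, including the observation that $n$ (resp.\ $m$) is infinite and absent from the formula precisely when $d_1=1$ (resp.\ $d_2=1$). The only caveat worth recording explicitly is the one you already flag: the quantities $A_1,m_1,n_1$ are only defined under the hypothesis $\ord f>1$ of Lemma \ref{Lenarcik1}, so that hypothesis must be read into the statement of Lemma \ref{lema:A} as well.
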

\noindent \begin{proof}
It is a consequence of Lemma \ref{Lenarcik1}.
\end{proof}

\begin{lema}(\cite[p.146]{Walenska})
\label{Lenarcik}
Let $f=x^{d_{1}}y^{d_{2}}g\in K[[x,y]]$ be a reduced power series with $g(0,0)=0$. Then

\begin{equation}\label{no-conveniente}
\mu(\mathcal N_f)= \left\{\begin{array} {ll}
\, [\mathcal N_f,\mathcal N_f]-\vert \mathcal N_f \vert_1-\vert \mathcal N_f \vert_2+1 &\hbox{\rm if }  (d_{1},d_{2})=(0,0)\;\\
\, [\mathcal N_f,\mathcal N_f]-\vert \mathcal N_f \vert_1+\vert \mathcal N_f \vert_2 &\hbox{\rm if }  (d_{1},d_{2})=(1,0)\\
\, [\mathcal N_f,\mathcal N_f]+\vert \mathcal N_f \vert_1-\vert \mathcal N_f \vert_2 &\hbox{\rm if }  (d_{1},d_{2})=(0,1)\;\\
\, [\mathcal N_f,\mathcal N_f]+\vert \mathcal N_f \vert_1+\vert \mathcal N_f \vert_2+1 &\hbox{\rm if }  (d_{1},d_{2})=(1,1).\;\\
\end{array}
\right.
\end{equation}

\end{lema}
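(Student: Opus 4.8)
The plan is to deduce all four formulas from Lemma \ref{lema:A}, from Lenarcik's formula (Lemma \ref{Lenarcik1}), and from the identity $[\mathcal N_f,\mathcal N_f]=2A$ recorded just above the statement, where $A$ denotes the area between $\mathcal N_f$ and the lines $x-d_1=0$, $y-d_2=0$. The case $(d_1,d_2)=(0,0)$ means that $f$ is convenient, and then the asserted formula is literally the definition \eqref{convenient} (which, as observed, coincides with \eqref{mueverybody} for convenient series), so nothing is to be proved there; only the three non-convenient cases require an argument, and they can all be treated in the same way.

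First I would record that in each non-convenient case $f=x^{d_1}y^{d_2}g$ with $g$ convenient and $g(0,0)=0$, so that $\ord g\geq 1$ and hence $\ord f=d_1+d_2+\ord g\geq 2>1$; moreover $\mathcal N_f=\mathcal N_g+(d_1,d_2)$ meets the line $x-d_1=0$ (at the image of the $y$-axis vertex of $\mathcal N_g$) and the line $y-d_2=0$ (at the image of its $x$-axis vertex), and since those vertices of $\mathcal N_g$ have a positive coordinate $\geq 1$, the points $(m_1,1)$ and $(1,n_1)$ occurring in Lemma \ref{lema:A} indeed lie on $\mathcal N_f$. Consequently Lemma \ref{Lenarcik1} applies and gives $\mu(\mathcal N_f)=2A_1+m_1+n_1-1$, where $A_1$ is the area limited by $\mathcal N_f$ and the lines $x-1=0$, $y-1=0$.

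Next I would substitute. Rewriting Lenarcik's formula as $2A_1=\mu(\mathcal N_f)-m_1-n_1+1$, plugging this into the expression for $A$ furnished by Lemma \ref{lema:A}, and then using $[\mathcal N_f,\mathcal N_f]=2A$ together with the values of $|\mathcal N_f|_1$, $|\mathcal N_f|_2$ also supplied by that lemma, each case collapses to a one-line identity. For instance, for $(d_1,d_2)=(1,1)$ Lemma \ref{lema:A} gives $A=A_1$, $|\mathcal N_f|_1=m_1-1$, $|\mathcal N_f|_2=n_1-1$, whence $\mu(\mathcal N_f)=[\mathcal N_f,\mathcal N_f]+m_1+n_1-1=[\mathcal N_f,\mathcal N_f]+|\mathcal N_f|_1+|\mathcal N_f|_2+1$; for $(1,0)$ it gives $A=A_1+\tfrac{m_1+m-2}{2}$, $|\mathcal N_f|_1=m-1$, $|\mathcal N_f|_2=n_1$, whence $[\mathcal N_f,\mathcal N_f]=\mu(\mathcal N_f)-n_1+m-1=\mu(\mathcal N_f)-|\mathcal N_f|_2+|\mathcal N_f|_1$, i.e.\ $\mu(\mathcal N_f)=[\mathcal N_f,\mathcal N_f]-|\mathcal N_f|_1+|\mathcal N_f|_2$; and the case $(0,1)$ is the mirror image of $(1,0)$ under exchanging the roles of $x$ and $y$.

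The only delicate point — the ``main obstacle'', modest as it is — is the bookkeeping: one has to keep straight which of $m,n$ may be $+\infty$ (they are, in exactly the branches where they do not appear), and make sure that the combinatorial quantities $m_1,n_1$ entering Lemma \ref{lema:A} are precisely the $x$- and $y$-coordinates of the points of $\mathcal N_f$ at height, respectively width, $1$ that Lenarcik's lemma requires — which is what the verification in the second paragraph guarantees. No geometric input beyond what already underlies Lemmas \ref{lema:A} and \ref{Lenarcik1} (ultimately Pick's formula) is needed.
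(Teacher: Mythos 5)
Your proof is correct and follows essentially the same route as the paper: the authors likewise combine the identity $[\mathcal N_f,\mathcal N_f]=2A$ with Lenarcik's formula $\mu(\mathcal N_f)=2A_1+m_1+n_1-1$ from Lemma \ref{Lenarcik1} and the case-by-case data of Lemma \ref{lema:A}. Your write-up merely carries out explicitly the substitution that the paper leaves to the reader.
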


\noindent \begin{proof} We have $[\mathcal N_f,\mathcal N_f]=2A$ and by Lemma \ref{Lenarcik1}, $\mu(\mathcal N_f)=2A_1+m_1+n_1-1$. Use Lemma \ref{lema:A}.
\end{proof}

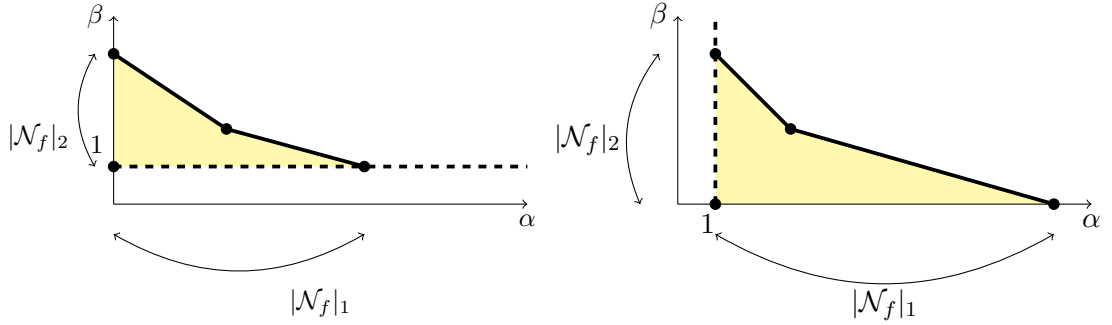
\begin{figure}[h!] 
\begin{center}
\begin{tikzpicture}[x=0.5cm,y=0.5cm] 
\begin{scope}[shift={(0,0)}]
\tikzstyle{every node}=[font=\small]
\fill[fill=yellow!40!white] (0,4) --(0,1)-- (20/3,1)--(3,2)-- cycle;
\node[draw,circle,inner sep=1.4pt,fill, color=black] at (0,4){};
\node[draw,circle,inner sep=1.4pt,fill, color=black] at (3,2){};
\node[draw,circle,inner sep=1.4pt,fill, color=black] at (0,1){};
\node[draw,circle,inner sep=1.4pt,fill, color=black] at (20/3,1){};
\node [left] at (0,1.6) {$1$};
\draw[<->] (-0.5,4) to [bend right] (-0.5,1);
\draw[->] (0,0) -- (11,0) node[right,below] {$\alpha$};
\draw[->] (0,0) -- (0,5) node[above,left] {$\beta$};
\draw[-, line width=0.5mm] (0,4) -- (3,2);
\draw[-, line width=0.5mm] (3,2) -- (20/3,1);
\draw[-, dashed, line width=0.5mm] (0,1) -- (11,1);
\draw[<->] (0,-0.8) to [bend right] (20/3,-0.8);
\draw[-] (5.5,-2) node[below] {$\vert \mathcal N_{f}\vert_1$};
\draw[-] (-2,2.4) node[right,below] {$\vert \mathcal N_{f}\vert_2$};
\end{scope}


\begin{scope}[shift={(15,0)}]
\fill[fill=yellow!40!white] (1,4) --(1,0)-- (10,0)--(3,2)-- cycle;
\node[draw,circle,inner sep=1.4pt,fill, color=black] at (1,4){};
\node[draw,circle,inner sep=1.4pt,fill, color=black] at (10,0){};
\node[draw,circle,inner sep=1.4pt,fill, color=black] at (3,2){};
\node[draw,circle,inner sep=1.4pt,fill, color=black] at (1,0){};
\node [left, below] at (0.8,0) {$1$};
\draw[<->] (-0.5,4) to [bend right] (-1,0);
\draw[->] (0,0) -- (11,0) node[right,below] {$\alpha$};
\draw[->] (0,0) -- (0,5) node[above,left] {$\beta$};
\draw[-, line width=0.5mm] (1,4) -- (3,2);
\draw[-, line width=0.5mm] (3,2) -- (10,0);
\draw[-, dashed, line width=0.5mm] (1,0) -- (1,5);
\draw[<->] (1,-0.8) to [bend right] (10,-0.8);
\draw[-] (5.5,-2) node[below] {$\vert \mathcal N_{f}\vert_1$};
\draw[-] (-2.4,2.4) node[right,below] {$\vert \mathcal N_{f}\vert_2$};
\end{scope}
\end{tikzpicture}
\end{center}
\caption{$f(x,y)=xg(x,y)$ and $f(x,y)=yg(x,y)$}  
   \label{fig:non-degenerate}
    \end{figure}

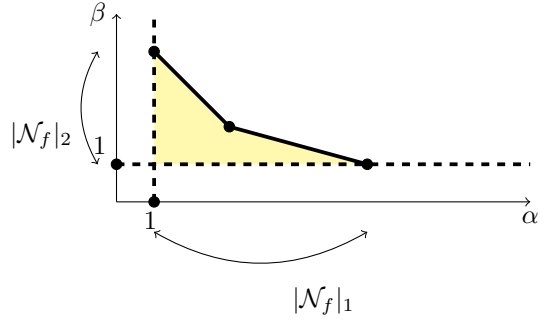
\begin{figure}[h!] 
\begin{center}
\begin{tikzpicture}[x=0.5cm,y=0.5cm] 
\fill[fill=yellow!40!white] (1,4) --(1,1)-- (20/3,1)--(3,2)-- cycle;
\tikzstyle{every node}=[font=\small]
\node[draw,circle,inner sep=1.4pt,fill, color=black] at (1,4){};
\node[draw,circle,inner sep=1.4pt,fill, color=black] at (3,2){};
\node[draw,circle,inner sep=1.4pt,fill, color=black] at (1,0){};
\node[draw,circle,inner sep=1.4pt,fill, color=black] at (0,1){};
\node[draw,circle,inner sep=1.4pt,fill, color=black] at (20/3,1){};
\node [left] at (0,1.5) {$1$};
\node [right, below] at (0.9,0) {$1$};
\draw[<->] (-0.5,4) to [bend right] (-0.5,1);
\draw[->] (0,0) -- (11,0) node[right,below] {$\alpha$};
\draw[->] (0,0) -- (0,5) node[above,left] {$\beta$};
\draw[-, line width=0.5mm] (1,4) -- (3,2);
\draw[-, line width=0.5mm] (3,2) -- (20/3,1);
\draw[-, dashed, line width=0.5mm] (0,1) -- (11,1);
\draw[-, dashed, line width=0.5mm] (1,0) -- (1,5);
\draw[<->] (1,-0.8) to [bend right] (20/3,-0.8);
\draw[-] (5.5,-2) node[below] {$\vert \mathcal N_{f}\vert_1$};
\draw[-] (-2,2.4) node[right,below] {$\vert \mathcal N_{f}\vert_2$};
\end{tikzpicture}
\end{center}
\caption{$f(x,y)=xyg(x,y)$}  
   \label{fig:non-degenerate}
    \end{figure}

A power series $f$ will be called {\it elementary} if $f$ is convenient and $\mathcal N_f$ contains only one edge $S$. The pair $(m,n)=(\vert S \vert_{1},\vert S \vert_{2})=(\ord f(x,0), \ord f(0,y))$ is by definition the {\it bidegree} of $f$  and we will denote it by $\bideg(f)$. In what follows we write $\Ini f=\ini (f,S)$. After \cite[Chapter 2]{Pfister-Ploski}, every convenient irreducible power series is elementary. If $f,g$ are elementary of bidegree $(m,n)$ resp. $(m',n')$ such that $\dfrac{m}{n}=\dfrac{m'}{n'}$, then $fg$ is elementary of bidegree $(m+m',n+n')$. Moreover, $\Ini fg=\Ini f \cdot \Ini \,g$.

\begin{lema}\label{homo}
If $f\in K[[x,y]]$ is elementary of bidegree $(m,n)$ and $d=\gcd(m,n)$ then $\Ini f(x,y)=F(x^{m/d},y^{n/d})$ where $F=F(u,v)$ is a homogeneous form of degree $d$. Moreover, if $f$ is irreducible then $\Ini f(x,y)=\left( ax^{m/d}+b y^{n/d}\right)^{d}.$
\end{lema}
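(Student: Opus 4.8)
The statement to be proved is Lemma~\ref{homo}: if $f$ is elementary of bidegree $(m,n)$ with $d=\gcd(m,n)$, then $\Ini f(x,y)=F(x^{m/d},y^{n/d})$ for a homogeneous form $F(u,v)$ of degree $d$, and moreover if $f$ is irreducible then $\Ini f(x,y)=(ax^{m/d}+by^{n/d})^{d}$. The plan is to argue in two steps: first the structure of $\Ini f$ as a weighted-homogeneous polynomial, then the irreducible case by reduction to the semigroup data recalled in Section~\ref{requisitos}.

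\textbf{Step 1: weighted homogeneity.} Since $f$ is elementary, $\mathcal N_f$ is the single edge $S$ joining $(m,0)$ and $(0,n)$, so every $(\alpha,\beta)\in\supp\ini(f,S)$ lies on the line $n\alpha+m\beta=mn$. Writing $m=d m'$, $n=d n'$ with $\gcd(m',n')=1$, this reads $n'\alpha+m'\beta=d m'n'$; since $\gcd(m',n')=1$ we get $m'\mid\alpha$ and $n'\mid\beta$, say $\alpha=m'i$, $\beta=n'j$, and then $i+j=d$. Hence each monomial of $\Ini f=\ini(f,S)$ is a scalar times $x^{m'i}y^{n'j}=(x^{m'})^{i}(y^{n'})^{j}$ with $i+j=d$; collecting these gives $\Ini f(x,y)=F(x^{m'},y^{n'})$ with $F(u,v)=\sum_{i+j=d}c_{i}u^{i}v^{j}$ homogeneous of degree $d$. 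Note $F$ has nonzero $u^{d}$-coefficient (because $(m,0)\in S$, as $f$ is convenient with $\ord f(x,0)=m$) and nonzero $v^{d}$-coefficient (because $(0,n)\in S$), so $F$ genuinely has degree $d$ in each variable.

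\textbf{Step 2: the irreducible case.} Assume now $f$ is irreducible. Since $K$ is algebraically closed, the homogeneous form $F(u,v)$ of degree $d$ factors as $\prod_{k=1}^{d}(a_k u+b_k v)$ with all $a_k,b_k$ nonzero (nonvanishing of the $u^d$- and $v^d$-coefficients). I want to show all the linear factors are proportional, i.e.\ $F(u,v)=c(au+bv)^{d}$. Suppose not; then $\overline\Ini f$, which equals $\Ini f$ up to a monomial and is reduced precisely when $F$ is squarefree after the substitution, would split and, via Hensel-type factorisation along the edge $S$ (the standard Newton-polygon factorisation, cf.\ \cite[Chapter~2]{Pfister-Ploski}), $f$ itself would be a product of at least two non-unit factors with the two factors having distinct edge slopes or the edge polynomial genuinely decomposing — contradicting irreducibility. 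Concretely, the cleanest route is: an irreducible convenient power series is elementary, and its semigroup $\Gamma(f)=\langle v_0,\dots,v_g\rangle$ has $v_0=\ord f=\min(m,n)$; the first characteristic exponent forces $\Ini f$ to be a $d$-th power, because $\Ini f$ parametrises the tangent cone and an irreducible branch has a single tangent direction counted with multiplicity $e_1/e_0$ along the first Newton pair. So $F(u,v)=c(au+bv)^{d}$, whence $\Ini f(x,y)=c(ax^{m'}+by^{n'})^{d}$; absorbing $c$ into $a,b$ (extract a $d$-th root, $K$ algebraically closed) gives $\Ini f(x,y)=(ax^{m/d}+by^{n/d})^{d}$ as claimed.

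\textbf{Main obstacle.} Step~1 is pure bookkeeping on lattice points and is routine. The real content is Step~2: justifying that an irreducible $f$ forces the edge polynomial $F$ to be a pure power. I expect the cleanest argument invokes the Newton–Puiseux / semigroup description from \cite[Chapter~2]{Pfister-Ploski}: if $F$ had two non-proportional linear factors one could separate $f$ into coprime factors (distinct branches or distinct tangents), contradicting $r(f)=1$. One must be careful that this works in arbitrary characteristic — but the factorisation of $f$ according to distinct slopes/roots of the edge polynomial uses only Hensel's lemma for the coprime factors $au+bv$ appearing with multiplicity, which is characteristic-free; the potentially delicate phenomenon (inseparability, e.g.\ $x^p+y^{p+1}$) occurs \emph{within} a single repeated factor and does not obstruct the conclusion that $F$ is a power. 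So the obstacle is presentational rather than substantive: choosing the shortest correct citation-backed justification for "$f$ irreducible $\Rightarrow$ $F$ is a $d$-th power."
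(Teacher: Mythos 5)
Your proposal is correct and follows essentially the same route as the paper: the lattice-point computation in Step 1 is identical, and the first half of your Step 2 (factor $F$ into pairwise non-proportional linear forms with multiplicities, lift this to a factorisation of $f$ by Hensel's lemma, and conclude $s=1$ from irreducibility) is exactly the paper's argument. The ``cleanest route'' digression via the semigroup and characteristic exponents is unnecessary and vaguer than the Hensel argument you already gave; keep the latter and drop the former.
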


\noindent \begin{proof}
The polynomial $\Ini f$ is a linear combination of monomials $x^{\alpha}y^{\beta}$, where $\alpha n+\beta m=nm$. It is easy to check that $\alpha=\frac{m}{d}\alpha_1$, $\beta=\frac{m}{d}\beta_1$ for some 
$\alpha_1, \beta_1\in \mathbb N$.
Moreover, $\alpha_1+\beta_1=d$. Therefore $x^{\alpha}y^{\beta}=x^{\frac{m}{d}\alpha_1}y^{\frac{n}{d}\beta_1}$ and $\Ini f=F(x^{\frac{m}{d}}y^{\frac{n}{d}}),$ where $F=F(u,v)$ is a homogeneous polynomial  of degree $d$. Let $F(u,v)=\prod_{i=1}^s(a_iu+b_iv)^{e_i}$ where $a_ib_j\neq a_jb_i$ for $i\neq j$. Then $\Ini f=F(x^{\frac{m}{d}}y^{\frac{n}{d}})=\prod_{i=1}^s(a_ix^{\frac{m}{d}}+b_iy^{\frac{n}{d}})^{e_i}.$ By Hensel's lemma (see \cite[Lemma A.1] {A-L-M}, \cite{Kunz}) we get $f(x,y)=g_1(x,y)\cdots g_s(x,y)\in K[[x,y]]$ where $\Ini g_i=(a_ix^{\frac{m}{d}}+b_iy^{\frac{n}{d}})^{e_i}$ for $i\in \{1, \ldots, s\}$. If $f$ is irreducible then $s=1$, $d=e_1$, $f=g_1$ and $\Ini f=\Ini \,g_1=(a_1x^{\frac{m}{d}}+b_1y^{\frac{n}{d}})^{d}$.
\end{proof}

\begin{coro}
\label{bi}
If $f$ is non-degenerate, convenient and irreducible power series of bidegree $(m,n)$  then $\gcd(n,m)=1$.
\end{coro}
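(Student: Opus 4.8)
The plan is to deduce the statement almost immediately from Lemma~\ref{homo} together with the definition of non-degeneracy. Since $f$ is convenient and irreducible it is elementary, so $\mathcal N_f$ consists of a single edge $S$ with endpoints $(m,0)$ and $(0,n)$, and $\ini(f,S)=\Ini f$. I would first check that $\overline{\ini}(f,S)=\Ini f$: because $f$ has no constant term and $f(x,0)f(0,y)\neq 0$, the form $\Ini f$ contains the monomials $x^{m}$ and $y^{n}$ with nonzero coefficients (and $m,n\geq 1$), so no monomial $x^{\alpha}y^{\beta}$ with $(\alpha,\beta)\neq(0,0)$ can divide it; hence $\alpha(S)=\beta(S)=0$ and $\overline{\ini}(f,S)=\Ini f$.

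Next I would invoke Lemma~\ref{homo}. Writing $d=\gcd(m,n)$, it gives $\Ini f=(ax^{m/d}+by^{n/d})^{d}$ for suitable $a,b\in K$; comparing the coefficients of $x^{m}$ and $y^{n}$ (which are $a^{d}$ and $b^{d}$) with the previous step shows $a\neq 0$ and $b\neq 0$. In particular $ax^{m/d}+by^{n/d}$ is a non-unit of $K[[x,y]]$, since $m/d\geq 1$ and $n/d\geq 1$.

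Finally, non-degeneracy of $f$ says precisely that $\overline{\ini}(f,S)$ is reduced for the unique edge $S$, i.e.\ that $\Ini f=(ax^{m/d}+by^{n/d})^{d}$ has no multiple factor. As $ax^{m/d}+by^{n/d}$ is a non-unit, this is possible only if $d=1$, which is the assertion $\gcd(m,n)=1$.

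I do not anticipate any real obstacle: the corollary is essentially a repackaging of the last sentence of Lemma~\ref{homo}. The only mildly delicate points, which I would spell out, are that convenience forces $\overline{\ini}(f,S)$ to coincide with $\Ini f$ (no monomial factor is peeled off), and that $a,b\neq 0$, so that $ax^{m/d}+by^{n/d}$ really is a non-unit whose $d$-th power obstructs reducedness when $d\geq 2$.
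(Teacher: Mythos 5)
Your proposal is correct and follows exactly the route the paper intends: the corollary is stated without proof as an immediate consequence of Lemma~\ref{homo}, and your argument (irreducible convenient $\Rightarrow$ elementary, $\overline{\ini}(f,S)=\Ini f=(ax^{m/d}+by^{n/d})^{d}$ with $a,b\neq 0$, reducedness forces $d=1$) is precisely the omitted verification. The two delicate points you flag are the right ones to spell out.
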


\begin{lema}[Newton factorization] \label{Newtonfact}
Let $f\in K[[x,y]]$ be convenient. Then $f=\prod_{S\in \mathcal N_f}f_{S}$ in $K[[x,y]]$ where $f_{S}$ are elementary. Moreover,  the bidegree of $f_{S}$ is $(\vert S\vert_{1},\vert S\vert_{2})$ and $\Ini f_{S}=c \cdot \overline \ini (f,S)$, for some $c\in K\backslash \{0\}$.
\end{lema}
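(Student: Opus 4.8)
The plan is to argue by induction on the number $k$ of compact edges of $\mathcal N_f$. If $k=1$, then $f$ is by definition elementary, and since $f$ is convenient its unique edge $S$ joins $(0,n)$ to $(m,0)$; hence $\ini(f,S)$ contains the monomials $y^{n}$ and $x^{m}$, so $\alpha(S)=\beta(S)=0$ and $\Ini f=\ini(f,S)=\overline\ini(f,S)$. This settles the base case, with $c=1$.

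For the inductive step, assume $k\geq 2$ and list the edges of $\mathcal N_f$ as $S_{1},\dots,S_{k}$ with $\vert S_{1}\vert_{1}/\vert S_{1}\vert_{2}<\cdots<\vert S_{k}\vert_{1}/\vert S_{k}\vert_{2}$; then $S_{k}$ is the edge with vertex $(m,0)$ on the horizontal axis, and its other endpoint $(\alpha_{0},\beta_{0})$, the common vertex of $S_{k-1}$ and $S_{k}$, satisfies $\alpha_{0}\geq 1$ and $\beta_{0}=\vert S_{k}\vert_{2}\geq 1$. I would use the weight $v$ with $v(x)=\vert S_{k}\vert_{2}/r(S_{k})$ and $v(y)=\vert S_{k}\vert_{1}/r(S_{k})$, whose level lines are parallel to $S_{k}$. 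As $v(x),v(y)\geq 1$, the ring $K[[x,y]]$ is complete for the filtration by $v$-degree and each $v$-homogeneous component is finite dimensional, so the graded Hensel lemma used in the proof of Lemma~\ref{homo} applies to the grading by $v$-degree. By convexity of $\mathcal N_f$ the $v$-initial form of $f$ is $\ini(f,S_{k})=x^{\alpha_{0}}\,\overline\ini(f,S_{k})$, and the $v$-homogeneous factors $x^{\alpha_{0}}$ and $\overline\ini(f,S_{k})$ are coprime in $K[[x,y]]$, since $\overline\ini(f,S_{k})$ contains the monomial $y^{\beta_{0}}$ and hence is not divisible by $x$. Hensel's lemma then yields $f=g\,h$ in $K[[x,y]]$ whose $v$-initial forms are proportional to $x^{\alpha_{0}}$ and to $\overline\ini(f,S_{k})$, respectively. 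Reading orders off the $v$-initial form of $h$ gives $\ord h(x,0)=\vert S_{k}\vert_{1}$ and $\ord h(0,y)=\vert S_{k}\vert_{2}$, so $h$ is convenient; and as every point of $\supp h$ lies on or above the supporting line of that $v$-initial form, whose endpoints $(0,\vert S_{k}\vert_{2})$ and $(\vert S_{k}\vert_{1},0)$ lie in $\supp h$, the polygon $\mathcal N_{h}$ equals this single segment. Thus $h$ is elementary of bidegree $(\vert S_{k}\vert_{1},\vert S_{k}\vert_{2})$ and $\Ini h=c_{k}\,\overline\ini(f,S_{k})$ with $c_{k}\in K\backslash\{0\}$.

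It remains to treat $g=f/h$, which is convenient with $\ord g(x,0)=\alpha_{0}\geq 1$ and $\ord g(0,y)=n-\beta_{0}\geq 1$. Invoking the multiplicativity of Newton diagrams ($\Delta(f)$ is the Minkowski sum of $\Delta(g)$ and $\Delta(h)$, and $\mathcal N_{h}$ is a single segment whose slope is not realized by any other edge of $\mathcal N_f$), one sees that $\mathcal N_{g}$ is $\mathcal N_f$ with the edge $S_{k}$ deleted; in particular it has $k-1$ edges $S_{1}',\dots,S_{k-1}'$ with $\vert S_{i}'\vert_{j}=\vert S_{i}\vert_{j}$, and for a weight $w$ perpendicular to $S_{i}'$ the factor $h$ contributes only a single monomial to the product $\ini_{w}(f)=\ini_{w}(g)\,\ini_{w}(h)$, whence $\overline\ini(g,S_{i}')=c_{i}'\,\overline\ini(f,S_{i})$ for $1\leq i\leq k-1$ and suitable $c_{i}'\in K\backslash\{0\}$. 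By the induction hypothesis $g=\prod_{i=1}^{k-1}g_{S_{i}'}$ with $g_{S_{i}'}$ elementary of bidegree $(\vert S_{i}\vert_{1},\vert S_{i}\vert_{2})$ and $\Ini g_{S_{i}'}$ proportional to $\overline\ini(g,S_{i}')$, hence to $\overline\ini(f,S_{i})$. Putting $f_{S_{i}}:=g_{S_{i}'}$ for $i<k$ and $f_{S_{k}}:=h$ finishes the induction.

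The step I expect to be the main obstacle is the passage from $f$ to $g$: one must check that detaching the elementary factor $h$ neither introduces new slopes nor alters, beyond multiplication by nonzero constants, the coefficients of $f$ along the edges $S_{1},\dots,S_{k-1}$. This is precisely the multiplicativity of Newton diagrams together with the observation that, with respect to a slope not occurring in $\mathcal N_{h}$, the factor $h$ contributes only a monomial; granting this, the remainder is bookkeeping with supporting lines and orders, and the only analytic input is the graded Hensel lemma already used in Lemma~\ref{homo}.
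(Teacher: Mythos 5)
Your proof is correct, but it takes a genuinely different route from the paper's. The paper argues ``top--down'': it factors $f$ into irreducible factors, quotes the fact (from \cite[Chapter 2]{Pfister-Ploski}) that every convenient irreducible power series is elementary, groups the irreducible factors according to the slope $I(f_i)=m_i/n_i$, observes that each group is elementary because a product of elementary series of equal slope is elementary, and then matches the resulting slopes with the edges of $\mathcal N_f$. You instead peel off one elementary factor per edge by induction on $\sharp\mathcal N_f$, using the weighted (graded) Hensel lemma to split $\ini(f,S_k)=x^{\alpha_0}\,\overline\ini(f,S_k)$ into coprime factors, and the Minkowski-sum multiplicativity $\Delta(gh)=\Delta(g)+\Delta(h)$ to control the Newton polygon and the edge-initial forms of the cofactor $g$. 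What each buys: the paper's proof is shorter but leans on the nontrivial external input that convenient irreducible series are elementary; yours is self-contained modulo the same graded Hensel lemma already invoked in Lemma~\ref{homo} and the standard multiplicativity of Newton diagrams, and it has the side benefit of not needing the irreducible factorization at all. Your identification of the honest pressure points (that $\mathcal N_g$ is $\mathcal N_f$ with $S_k$ deleted, and that $h$ contributes only a monomial to $\ini_w(f)$ for weights $w$ supporting the other edges) is accurate, and both follow from the Minkowski-sum decomposition since the face of $\Delta(h)$ for any weight not normal to $S_k$ is a vertex; so the argument goes through.
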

\noindent \begin{proof}
Firstly we prove that any convenient power series  is a product of elementary power series. If $f$ is elementary of bidegree $(m,n)$ then we put $I(f)=\frac{m}{n}$. Let $f=f_1\cdots f_r$ be the factorization into irreducible factors of a convenient power series. Let $\{I(f_i)\;:\;1\leq i\leq r\}=\{\omega_j\;:\;1\leq j\leq s\}$ where $\omega_1<\omega_2<\cdots <\omega_s$. For any $j\in \{1,\ldots, s\}$ we put $A_j:=\{k\in \{1,\ldots, s\}\;:\;I(f_k)=\omega_j\}$ and $g_j:=\prod_{i\in A_j}f_i$. Then $g_j$ is an elementary power series and $f=g_1\cdots g_s$ with  $I(g_i)<I(g_{j)}$  for any $i\neq j$. Let $\bideg(g_{k})=(m_{k},n_{k})$. Since 
$\frac{m_{1}}{n_{1}}<\cdots <\frac{m_{s}}{n_{s}}$ the points $v_{k}=\left(\sum_{i=1}^{k}m_{i},\sum_{i=k+1}^{s}n_{i}\right)$ with $k\in\{1, \ldots, s\}$ (by convention the empty sum equals zero) are vertices of $\mathcal N_f$. Let $S^{(k)}$ be the segment of $\mathcal N_f$ with vertices $v_{k-1}$ and $v_{k}$ for $k\in \{1,\ldots,s\}$, so $(\vert S^{(k)} \vert_{1},\vert S^{(k)} \vert_{2})=(m_{k},n_{k})$.
If $S\in \mathcal N_f$ then $S=S^{(k)}$ for some $k\in \{1,\ldots,s\}$ and we put $f_{S}=g_{k}$. Therefore 
$f=\prod_{S\in \mathcal N_f}f_{S}$ where $f_{S}$ are elementary, $\bideg(f_{S})=(\vert S \vert_{1},\vert S \vert_{2})$ and $\Ini f_{S}=c\cdot \overline \ini (f,S)$ for some $c\in K\backslash \{0\}$.  
\end{proof}

\begin{coro}
If $f\in K[[x,y]]$ is non-degenerate then $f_{S}$ are non-degenerate for any $S\in \mathcal N_f.$
\end{coro}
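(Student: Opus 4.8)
The plan is to read the statement off directly from the Newton factorization, with essentially no extra work. Since $\mathcal N_f$ and the product $f=\prod_{S\in\mathcal N_f}f_S$ are in play, $f$ is convenient, so Lemma \ref{Newtonfact} applies: each $f_S$ is elementary of bidegree $(\vert S\vert_1,\vert S\vert_2)$ and $\Ini f_S=c_S\cdot\overline\ini(f,S)$ for some $c_S\in K\setminus\{0\}$.

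First I would fix $S\in\mathcal N_f$ and use that $f_S$, being elementary, has a Newton polygon $\mathcal N_{f_S}$ consisting of a single edge $S'$; by the convention $\Ini f_S=\ini(f_S,S')$ this gives $\ini(f_S,S')=c_S\cdot\overline\ini(f,S)$. Now $\overline\ini(f,S)$ is a convenient power series by the very definition of $\overline\ini$, hence so is the nonzero scalar multiple $\ini(f_S,S')$; therefore the monomial of highest degree dividing $\ini(f_S,S')$ is $1$, and $\overline\ini(f_S,S')=\ini(f_S,S')=c_S\cdot\overline\ini(f,S)$.

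Then I would invoke the hypothesis: $f$ non-degenerate means precisely that $\overline\ini(f,S)$ has no multiple factors, and multiplying by the nonzero constant $c_S$ preserves this, so $\overline\ini(f_S,S')$ is reduced. Since $S'$ is the only edge of $\mathcal N_{f_S}$, this single condition is exactly what non-degeneracy of $f_S$ requires, so $f_S$ is non-degenerate; letting $S$ range over $\mathcal N_f$ finishes the proof. I do not expect any genuine obstacle: the only points deserving a word of care are that every $f_S$ is automatically convenient (so the definition of non-degeneracy applies verbatim) and that it carries exactly one Newton edge (so one reducedness check suffices), both of which are immediate from the definition of an elementary power series.
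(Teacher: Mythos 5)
Your proof is correct and takes exactly the route the paper intends: the corollary is stated immediately after Lemma \ref{Newtonfact} (with no written proof) precisely because it follows at once from the relation $\Ini f_{S}=c\cdot\overline{\ini}(f,S)$ together with the observation that an elementary power series is convenient and has a single Newton edge, which is what you spell out.
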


\noindent For any two power series  $f,g\in K[[x,y]]$ we put $i_0(f,g):=\dim_{K} K[[x,y]]/(f,g)$ and call it the \emph{intersection multiplicity} of $f$ and $g$.

\begin{lema}\label{intmul}
If $ \mathcal N_f=\{S\}$ and $ \mathcal N_g=\{T\}$ are elementary then
$i_{0}(f,g)\geq [S,T]$ with equality if and only if $S$ and $T$ are not parallel or the system of equations $\Ini f=0$, $\Ini g=0$ has the unique solution $(x,y)=(0,0)$.
\end{lema}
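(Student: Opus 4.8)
The plan is to establish the inequality $i_0(f,g)\geq[S,T]$ together with the equality criterion, first reducing to the case in which $g$ is irreducible and then exploiting the normalisation of the branch $\{g=0\}$. Write $(m,n)=(|S|_1,|S|_2)=\bideg(f)$ and $(m',n')=(|T|_1,|T|_2)=\bideg(g)$, so that $[S,T]=\min\{mn',nm'\}$, and recall that an elementary power series has support contained in the closed region $R=\{(\alpha,\beta)\in(\R_{\geq 0})^2: n\alpha+m\beta\geq mn\}$ lying above $S$, with the vertices $(m,0)$ and $(0,n)$ of $S$ in $\supp f$ (because $m=\ord f(x,0)$ and $n=\ord f(0,y)$). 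First I would reduce to $g$ irreducible: writing $g=g_1\cdots g_s$ as a product of irreducible factors, each $g_j$ is again convenient, hence elementary, say $\mathcal N_{g_j}=\{T_j\}$, and, $g$ being elementary, all the $T_j$ have the same slope $m'/n'$ (Lemma~\ref{Newtonfact}); then $i_0(f,g)=\sum_j i_0(f,g_j)$ by additivity of the intersection multiplicity, $[S,T]=\sum_j[S,T_j]$ because the $\min$ defining $[S,T_j]$ is realised in the same argument for every $j$, and $\Ini g=\prod_j\Ini g_j$ as recalled above. So it suffices to treat an irreducible $g$ and then reassemble.

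For an irreducible $g$ I would normalise the one-dimensional complete local domain $K[[x,y]]/(g)$ to obtain a parametrisation $\gamma(t)=(x(t),y(t))\in K[[t]]^2$ with $i_0(g,h)=\ord_t h(\gamma(t))$ for every $h\in K[[x,y]]$; this is valid in arbitrary characteristic. Since $g$ is convenient, $\ord_t x(t)=i_0(g,x)=n'$ and $\ord_t y(t)=i_0(g,y)=m'$, so $x(t)=\xi t^{n'}+\cdots$ and $y(t)=\eta t^{m'}+\cdots$ with $\xi,\eta\neq0$. A monomial $x^\alpha y^\beta$ then pulls back to an element of $t$-order exactly $L(\alpha,\beta):=\alpha n'+\beta m'$. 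Minimising the linear form $L$ over the polyhedron $R$, whose only vertices are $(m,0)$ and $(0,n)$ and whose recession cone is the first quadrant on which $L\geq0$, gives $\min_R L=\min\{mn',nm'\}=[S,T]$. Hence $i_0(f,g)=\ord_t f(\gamma(t))\geq\min_{(\alpha,\beta)\in\supp f}L(\alpha,\beta)\geq[S,T]$, and reassembling the factors of $g$ proves the inequality.

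For the equality criterion I would analyse when the terms of $f(\gamma(t))$ of minimal $t$-order cancel. The face of $R$ on which $L$ attains its minimum is a single vertex of $S$ when $S$ and $T$ are not parallel (it is $(m,0)$ if $m/n<m'/n'$, and $(0,n)$ otherwise), and is the whole edge $S$ when $S$ and $T$ are parallel; in the first case I would also check, by an elementary computation along the two edges of $R$ through that vertex, that every other point of $R$ gives a strictly larger value of $L$. Thus if $S$ and $T$ are not parallel the unique minimising monomial lies in $\supp f$ with nonzero coefficient, no cancellation occurs, and $i_0(f,g)=[S,T]$. If $S$ and $T$ are parallel, then $\supp f\cap S=\supp\Ini f$, the coefficient of $t^{[S,T]}$ in $f(\gamma(t))$ equals $(\Ini f)(\xi,\eta)$, and applying the same analysis to $g$ in place of $f$ the identity $g(\gamma(t))=0$ forces $(\Ini g)(\xi,\eta)=0$. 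Since $S$ and $T$ are parallel, $f$ and $g$ have the same reduced bidegree $(\bar m,\bar n)$, and by Lemma~\ref{homo} one has $\Ini f=F(x^{\bar m},y^{\bar n})$ for a homogeneous form $F$ while $\Ini g$ is a power of $ax^{\bar m}+by^{\bar n}$ with $ab\neq0$; hence $(\xi^{\bar m}:\eta^{\bar n})=(b:-a)$ is the unique projective zero of $\Ini g$, and $(\Ini f)(\xi,\eta)=0$ holds exactly when $(b:-a)$ is a zero of $F$, that is, exactly when $\Ini f$ and $\Ini g$ have a common nonzero zero. Reassembling over the irreducible factors $g_j$ and using $\Ini g=\prod_j\Ini g_j$, I would conclude that $i_0(f,g)>[S,T]$ precisely when $S$ and $T$ are parallel and the system $\Ini f=\Ini g=0$ has a nonzero solution; equivalently, $i_0(f,g)=[S,T]$ if and only if $S$ and $T$ are not parallel, or that system has only the trivial solution $(0,0)$.

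The inequality itself is routine once the normalisation is in hand. The hard part will be the parallel case of the equality statement: identifying the coefficient of $t^{[S,T]}$ in $f(\gamma(t))$ as $(\Ini f)(\xi,\eta)$, pinning the point $(\xi,\eta)$ down through $g(\gamma(t))=0$ and Lemma~\ref{homo}, and verifying that these per-branch conditions reassemble, via $\Ini g=\prod_j\Ini g_j$, into the stated condition on the two systems. One should also not overlook the elementary but genuinely needed check that monomials of $f$ lying strictly above $S$ contribute strictly larger $t$-order when $S$ and $T$ are not parallel.
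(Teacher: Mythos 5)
Your proof is correct, but it takes a genuinely different route from the paper. The paper's proof is very short: it introduces the weight vector $\overrightarrow{w}=(n,m)$ determined by $S$, observes that $\ini_{\overrightarrow{w}}f=\Ini f$ and that $\ini_{\overrightarrow{w}}g$ is either $\Ini g$ (parallel case) or a monomial $cy^{n_1}$ (non-parallel case), and then invokes an external result (\cite[Lemma A.1]{GB-P2018}) which bounds $i_0(f,g)$ below by the product of weighted orders divided by $w_1w_2$, with equality exactly when the weighted initial forms have no common nontrivial zero. You instead prove everything from scratch: you reduce to $g$ irreducible via additivity of $i_0$ and the compatibility $[S,T]=\sum_j[S,T_j]$, parametrise each branch by its normalisation, and read off $i_0(f,g_j)$ as the $t$-order of $f$ along the branch, minimising the linear form $\alpha n'+\beta m'$ over the region above $S$ and identifying the leading coefficient as $(\Ini f)(\xi_j,\eta_j)$; Lemma \ref{homo} then lets you translate the per-branch vanishing conditions into the existence of a common nonzero zero of $\Ini f$ and $\Ini g$. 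Your argument is longer but self-contained (it essentially inlines a proof of the special case of the cited lemma that is needed here), and it makes visible exactly where the equality can fail, branch by branch; the paper's argument is shorter and more uniform but delegates the key valuation-theoretic step to a reference. All the delicate points you flag (strictness of the minimum at a vertex in the non-parallel case, nonvanishing of $\xi_j,\eta_j$ from convenience, the passage from a common zero off the axes of $\Ini f,\Ini g$ to one of the factors $\Ini g_j$, and the fact that a common zero on an axis is excluded because $f$ and $g$ are convenient) are indeed the ones that need checking, and they all go through.
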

\noindent \begin{proof}
Put $\bideg(f):=(m,n)$ and $\bideg(g):=(m_{1},n_{1})$. We have to check that $i_{0}(f,g)\geq \min \{mn_{1},m_{1}n\}$ with equality if and only if $\frac{m}{n}=\frac{m_{1}}{n_{1}}$ or the system of equations
$\Ini f=0$, $\Ini g=0$ has the only solution $(x,y)=(0,0)$. Put $f(x,y)=\sum_{ij}a_{ij}x^{i}y^{j}$. Let $\overrightarrow w=(n,m)\in \mathbb N_{+ }^{2}$. Then
$\ord_{\overrightarrow w}(f):=\inf \{ni+jm\;:\;a_{ij}\neq 0\}=nm$ and
$\ini_{\overrightarrow w} f:=\sum_{i n+j m=nm} a_{ij}x^{i}y^{j}=\Ini f$. Let us distinguish two cases.\\

\noindent {\it Case 1:} $\frac{m}{n}\neq \frac{m_{1}}{n_{1}}.$ We may assume $\frac{m}{n}<\frac{m_{1}}{n_{1}}.$ Then $\ord_{\overrightarrow w}(g)=mn_{1}=\min\{mn_{1}, m_{1}n\}$ and $\ini_{\overrightarrow w} g=cy^{n_{1}}$ for $c\neq 0$. Therefore the system of equations $\ini_{\overrightarrow w} f=0$ and $\ini_{\overrightarrow w} g=0$ has the unique solution $(x,y)=(0,0)$ and we get
\[
i_{0}(f,g)=\frac{\ord_{\overrightarrow w} f {\ord_{\overrightarrow w} g}}{mn}=\ord_{\overrightarrow w} g=mn_1=\min \{mn_{1},m_{1}n\},
\]
by \cite[Lemma A.1]{GB-P2018}.\\

\noindent {\it Case 2:} $\frac{m}{n}=\frac{m_{1}}{n_{1}}.$ We check $\ord_{\overrightarrow w}(g)=mn_{1}$ and $\ini_{\overrightarrow w} g=\Ini g$. Again by \cite[Lemma A.1]{GB-P2018} we get $i_{0}(f,g)\geq \ord_{\overrightarrow w} g=m n_{1}=nm_{1}$ with equality if the system $\Ini f=0$, $\Ini g=0$ has the unique solution $(x,y)=(0,0)$.
\end{proof}

\section{Main result}
\label{sec:main}

The following theorem is the main result of this note:

\begin{teorema}\label{main}
Let $f\in K[[x,y]]$ be a reduced power series. Then 
\begin{enumerate}
\item $\bar \mu (f)- \mu(\mathcal N_f)\geq r(\mathcal N_f)-r(f)\geq 0$.
\item If $f$ is non-degenerate then $\bar \mu (f)= \mu(\mathcal N_f)$ and $r(\mathcal N_f)=r(f)$.
\end{enumerate}
\end{teorema}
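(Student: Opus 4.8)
The plan is to reduce everything to the behaviour of the two invariants under the Newton factorization of Lemma~\ref{Newtonfact}, using the multiplicativity-type formula in Proposition~\ref{prop:1}(3) on the analytic side and a matching combinatorial identity for $\mu(\mathcal N_f)$ and $r(\mathcal N_f)$ on the Newton side. First I would dispose of the non-convenient cases: writing $f=x^{d_1}y^{d_2}g$ with $d_1,d_2\le 1$ and $g$ convenient (or a unit), both $\bar\mu$ and the pair $(\mu(\mathcal N\bullet),r(\mathcal N\bullet))$ change in a controlled way when passing from $g$ to $f$ — on the $\bar\mu$ side via Proposition~\ref{prop:1}(3) applied to the coprime factors $x$, $y$, $g$ together with $i_0(x,g)=\ord g(0,y)$, $i_0(y,g)=\ord g(x,0)$, $i_0(x,y)=1$; on the Newton side via Lemma~\ref{Lenarcik} and the definition $r(\mathcal N_f)=\sum_S r(S)+k+l$. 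Checking that these two corrections agree term by term lets me assume from then on that $f$ is convenient. (The reduction to convenient $f$ also handles the passage from \eqref{mueverybody} to \eqref{convenient}, since for a convenient series the sup is attained.)

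For convenient $f$, apply Lemma~\ref{Newtonfact} to get $f=\prod_{S\in\mathcal N_f}f_S$ with each $f_S$ elementary of bidegree $(|S|_1,|S|_2)$. On the analytic side, Proposition~\ref{prop:1}(3) gives
\[
\bar\mu(f)=\sum_{S\in\mathcal N_f}\bar\mu(f_S)+2\!\!\sum_{S\ne T}\!\!i_0(f_S,f_T)-(\#\mathcal N_f-1).
\]
On the combinatorial side I would prove the parallel identity
\[
\mu(\mathcal N_f)=\sum_{S\in\mathcal N_f}\mu(\mathcal N_{f_S})+2\!\!\sum_{S\ne T}\!\![S,T]-(\#\mathcal N_f-1),
\]
which follows by expanding $[\mathcal N_f,\mathcal N_f]=\sum_{S,T}[S,T]$ and $|\mathcal N_f|_i=\sum_S|S|_i$ in the defining formula \eqref{convenient} and noting $\mu(\mathcal N_{f_S})=[S,S]-|S|_1-|S|_2+1$ for a single-edge polygon, together with the elementary fact that for non-parallel $S,T$ one has $|S|_1|T|_2+|S|_2|T|_1=2[S,T]+(\text{cross terms that telescope})$ — more precisely I would instead split the sum as $2\sum_{S<T}[S,T]$ using an ordering of edges by slope, which is the cleaner bookkeeping. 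Similarly $r(f)=\sum_S r(f_S)$ trivially and $r(\mathcal N_f)=\sum_S r(S)=\sum_S r(\mathcal N_{f_S})$ for convenient $f$. Subtracting, the whole inequality (1) reduces to the elementary case: it suffices to show $\bar\mu(f_S)-\mu(\mathcal N_{f_S})\ge r(\mathcal N_{f_S})-r(f_S)\ge 0$ for a single elementary series, plus the edge-pair comparison $i_0(f_S,f_T)\ge[S,T]$, which is exactly Lemma~\ref{intmul}.

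So the crux is the elementary case: for $f$ elementary of bidegree $(m,n)$ with $d=\gcd(m,n)$, one has $\mathcal N_f=\{S\}$, $\mu(\mathcal N_f)=[S,S]-m-n+1=mn-m-n+1=(m-1)(n-1)$, and $r(\mathcal N_f)=r(S)=d$. By Lemma~\ref{homo}, $\Ini f=\prod_{i=1}^s(a_ix^{m/d}+b_iy^{n/d})^{e_i}$ with $\sum e_i=d$, and by Hensel $f=\prod g_i$ with each $g_i$ elementary irreducible of bidegree $(e_i m/d,e_i n/d)$, so $r(f)=\sum_i r(g_i)$ where $r(g_i)\ge 1$ with equality precisely when $e_i m/d,e_i n/d$ are coprime, i.e.\ $e_i=1$; hence $r(f)\le d=r(\mathcal N_f)$, giving the right-hand inequality. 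For the left-hand inequality I would again apply Proposition~\ref{prop:1}(3) to $f=\prod g_i$ and Lemma~\ref{intmul} (all edges parallel, so $i_0(g_i,g_j)\ge[S_i,S_j]=e_ie_j mn/d^2$), reducing to the irreducible elementary case, where $\bar\mu(g)=c(g)-r(g)+1$ is computed from the semigroup $\Gamma(g)$: an irreducible elementary $g$ of bidegree $(m',n')$ with $\gcd(m',n')=1$ has $\Gamma(g)=\langle m',n'\rangle$ (or $\langle n',m'\rangle$), whose conductor is $(m'-1)(n'-1)$, so $\bar\mu(g)=(m'-1)(n'-1)=\mu(\mathcal N_g)$; for the general irreducible elementary series the semigroup only grows, so $c(g)\ge(m'-1)(n'-1)$ and the inequality persists after accounting for the intersection terms. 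Putting the pieces back through the factorization gives (1). For part (2), non-degeneracy of $f$ forces each $\overline\ini(f,S)$ — hence each $\Ini f_S$ up to a monomial — to be reduced, so in the Hensel factorization all $e_i=1$; then $r(f_S)=\#\{\text{factors}\}=r(S)$, each irreducible factor is convenient of bidegree with coprime entries (Corollary~\ref{bi}), so $\bar\mu$ equals $\mu(\mathcal N\bullet)$ on each factor, and the intersection multiplicities $i_0(f_S,f_T)$ attain $[S,T]$ by Lemma~\ref{intmul} since distinct edges have distinct slopes; feeding equalities through both factorization formulas yields $\bar\mu(f)=\mu(\mathcal N_f)$ and $r(f)=r(\mathcal N_f)$.

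The main obstacle I anticipate is the bookkeeping in the non-convenient reduction and in verifying that the analytic correction terms from Proposition~\ref{prop:1}(3) (with factors $x^{d_1}$, $y^{d_2}$, $g$) exactly mirror the combinatorial corrections in Lemma~\ref{Lenarcik} and in the definition of $r(\mathcal N_f)$ with its $+k+l$; getting the four cases $(d_1,d_2)\in\{0,1\}^2$ to line up — including the contribution of $i_0(x,g)$, $i_0(y,g)$ and the extra intersection $i_0(x,y)=1$ when $d_1=d_2=1$ — is where a sign or off-by-one error is most likely to creep in, and it is the only genuinely fiddly part once the elementary case is in hand.
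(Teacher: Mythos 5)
Your overall architecture coincides with the paper's: Newton factorization plus Proposition~\ref{prop:1}(3) on the analytic side, the matching expansion of $[\mathcal N_f,\mathcal N_f]$ and $\vert\mathcal N_f\vert_i$ on the combinatorial side, Lemma~\ref{intmul} for the cross terms, reduction to the elementary and then to the irreducible elementary case, and finally the $x^{d_1}y^{d_2}g$ bookkeeping for non-convenient $f$. All of that is sound and is essentially what the paper does.

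The genuine gap is at the base case, the irreducible elementary series of bidegree $(m,n)$ with $d=\gcd(m,n)>1$ (which must be handled for part (1), since degenerate irreducible branches routinely have $d>1$). There you need $\bar\mu(f)=c(f)\geq (m-1)(n-1)+d-1$; the extra $d-1$ is exactly the quantity $r(\mathcal N_f)-r(f)$ you are trying to dominate, so it cannot be dropped. Your justification --- ``the semigroup only grows, so $c(g)\geq (m'-1)(n'-1)$'' --- does not deliver this: first, enlarging a numerical semigroup can only \emph{decrease} its conductor, so monotonicity points the wrong way; second, even the weaker bound $c\geq(m-1)(n-1)$ would still be short by $d-1$, and the phrase ``after accounting for the intersection terms'' has no content for a single irreducible branch. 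The paper's substitute is Lemma~\ref{cota}, a semigroup-theoretic estimate $c\geq(v_0-1)(v_1-1)+\gcd(v_0,v_1)-1$ proved by massaging the formula $c=\sum_k(e_{k-1}-e_k)(b_k-1)$, combined with a case analysis relating $(v_0,v_1)$ to $(m,n)$: either $y=0$ has maximal contact and $n=v_1$, or $n<v_1$ forces $m\mid n$ and hence $d=m$, and each case is checked separately. Some argument of this kind is indispensable; without it the chain of reductions collapses at its foundation. (A smaller point: your assertion $\Gamma(g)=\langle m',n'\rangle$ in the coprime case also rests on the nontrivial fact that $n'\leq v_1$, which the paper imports from \cite{GB-P2019}; it is true, but not automatic.)
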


The first statement of Theorem \ref{main} was proved in \cite[Proposition 7]{BGreuel}. We  provide a new and simple proof of it.
The proof of Theorem \ref{main} is given in Section \ref{proof}.\\

As an immediate consequence of Theorem \ref{main} we have

\begin{coro}(\cite[Lemma 4]{BGreuel})
Let $f\in K[[x,y]]$ be a reduced power series. We have $r(f)\leq r(\mathcal N_f)$ and
if $f$ is non-degenerate then $r(f)=r(\mathcal N_f)$.
\end{coro}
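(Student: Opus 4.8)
The plan is to prove Theorem~\ref{main} by reducing to the elementary case via the Newton factorization (Lemma~\ref{Newtonfact}) and then bootstrapping with the additivity formula for $\bar\mu$ from Proposition~\ref{prop:1}(3). First I would reduce to the convenient case: given a reduced $f = x^{d_1}y^{d_2}g$ with $d_1,d_2\le 1$ and $g$ convenient (or a unit), I would compare $\bar\mu(f)$, $\mu(\mathcal N_f)$, $r(f)$, $r(\mathcal N_f)$ with the corresponding quantities for $g$ (or for $g$ perturbed to $g+x^m+y^m$), using Proposition~\ref{prop:1}(3) on the factors $x^{d_1}$, $y^{d_2}$, $g$ together with Lemma~\ref{Lenarcik} to track how the combinatorial quantities change; here $i_0(x,g)=\ord g(0,y)$, $i_0(y,g)=\ord g(x,0)$, and $i_0(x,y)=1$ are the inputs. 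This should show that it suffices to prove both statements for convenient $f$.

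For convenient $f$, apply Lemma~\ref{Newtonfact} to write $f=\prod_{S\in\mathcal N_f}f_S$ with $f_S$ elementary of bidegree $(|S|_1,|S|_2)$ and $\Ini f_S = c\,\overline{\ini}(f,S)$. By Proposition~\ref{prop:1}(3) applied to this factorization,
\[
\bar\mu(f) = \sum_{S\in\mathcal N_f}\bar\mu(f_S) + 2\sum_{S\neq T}i_0(f_S,f_T) - (\#\mathcal N_f - 1),
\]
and $r(f)=\sum_S r(f_S)$. On the combinatorial side, $\mu(\mathcal N_f)=[\mathcal N_f,\mathcal N_f]-|\mathcal N_f|_1-|\mathcal N_f|_2+1 = \sum_{S\neq T}[S,T] + \sum_S\big([S,S]-|S|_1-|S|_2\big)+1$ and $r(\mathcal N_f)=\sum_S r(S)$. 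Matching terms, the inequality $\bar\mu(f)-\mu(\mathcal N_f)\ge r(\mathcal N_f)-r(f)\ge 0$ reduces to two ingredients: (a) for each $S$, the single-edge inequality $\bar\mu(f_S) - \big([S,S]-|S|_1-|S|_2+1\big) \ge r(S) - r(f_S) \ge 0$; and (b) for $S\neq T$, $i_0(f_S,f_T)\ge [S,T]$, which is exactly Lemma~\ref{intmul}.

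So the real work is the elementary (single-edge) case: for $f$ elementary of bidegree $(m,n)$ with $d=\gcd(m,n)$, prove $\bar\mu(f)\ge mn - m - n + 1 + (d - r(f))$ and $r(f)\le d = r(S)$, with equality in both when $f$ is non-degenerate. By Lemma~\ref{homo}, $\Ini f = F(x^{m/d},y^{n/d})$ with $F$ homogeneous of degree $d$, so $F$ factors as $\prod (a_i u + b_i v)^{e_i}$ and Hensel gives $f=\prod g_i$ with $\Ini g_i=(a_i x^{m/d}+b_i y^{n/d})^{e_i}$; iterating the reduction, it comes down to the case $F=(au+bv)^d$, i.e.\ $\Ini f=(ax^{m/d}+by^{n/d})^d$. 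Here I expect to compute $\bar\mu$ via the semigroup/conductor description from Section~\ref{requisitos} when $f$ is irreducible ($d$ and $m/d$ coprime force the first characteristic pair, giving $\bar\mu(f)=(n_1-1)v_1-v_0+1=(d-1)\min(m,n)-\min(m/d,n/d)+1$ after suitable normalization, which one checks is $\ge mn-m-n+1$ with equality iff $d=1$), and handle the reducible subcase again through Proposition~\ref{prop:1}(3), noting each irreducible factor $g_i$ has bidegree a multiple of $(m/d,n/d)$ with the \emph{same} slope so the pairwise intersection numbers $i_0(g_i,g_j)$ are large. The main obstacle is precisely this elementary estimate: controlling $\bar\mu$ of an elementary power series whose initial form is a pure power $(ax^{m/d}+by^{n/d})^d$ in arbitrary characteristic, where the irreducible factors can behave badly (e.g.\ $x^p+y^{p+1}$) — this is where positive characteristic genuinely enters, and one must show the ``defect'' $\bar\mu(f)-(\text{Newton number of }S)$ is nonnegative and absorbs $d-r(f)$, while vanishing exactly under non-degeneracy (where, by Corollary~\ref{bi}, $\gcd(m,n)=1$ so $d=1$, $r(S)=1$, and a single smooth branch forces $\bar\mu(f)=mn-m-n+1$).
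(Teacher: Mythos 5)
Your route is the same as the paper's: the corollary is read off from Theorem \ref{main}, and your outline of that theorem's proof (reduction to the convenient case, Newton factorization, additivity of $\bar\mu$ from Proposition \ref{prop:1}(3), Lemma \ref{intmul} for the cross terms, and the single-edge case as the core) matches the paper's Section \ref{proof} step by step. For the statement actually at issue --- $r(f)\le r(\mathcal N_f)$ with equality under non-degeneracy --- what you wrote is complete: it only uses $r(f)=d_1+d_2+\sum_S r(f_S)$ from the Newton factorization together with $r(f_S)\le\gcd(\vert S\vert_1,\vert S\vert_2)=r(S)$, which follows from Lemma \ref{homo} (each irreducible factor of $f_S$ contributes $d_i\ge 1$ to $d=\sum_i d_i$) and, for the equality, from Corollary \ref{bi}; none of this touches the $\bar\mu$ estimate. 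I would only caution that the step you yourself flag as the main obstacle is a genuine gap if you also want Theorem \ref{main}(1) in full: your proposed value $(d-1)\min(m,n)-\min(m/d,n/d)+1$ for the conductor presumes that $\Gamma(f)$ has only two generators, which fails for a general irreducible elementary $f$ in arbitrary characteristic (the higher generators $v_2,\dots,v_g$ contribute to $c$). The paper closes this with Lemma \ref{cota}, the lower bound $c\ge(v_0-1)(v_1-1)+\gcd(v_0,v_1)-1$, combined with a short maximal-contact argument relating $(v_0,v_1)$ to the bidegree $(m,n)$; but that refinement is immaterial for the corollary itself.
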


\begin{coro}(\cite[Proposition 3.17]{Be-Pe}, \cite[Proposition 5]{BGreuel})
Let $f\in K[[x,y]]$. We have $\delta(\mathcal N_f)\leq \delta(f)$ and if 
$f$ is non-degenerate then $\delta(\mathcal N_f)=\delta(f)$. 
\end{coro}
\begin{proof} From the definition of the invariant Milnor number of $f$ and the equality \eqref{eq:delta} we have $\bar\mu(f)-\mu(\mathcal N_f)=2(\delta(f)-\delta(\mathcal N_f))+r(\mathcal N_f)-r(f)$. We use Theorem \ref{main}.
\end{proof}
\begin{coro}(\cite[Theorem 9]{BGreuel})
Let $f\in K[[x,y]]$ be a reduced power series. If $f$ is strongly non-degenerate then $f$ is tame, i.e., $\mu(f)=\bar \mu (f)$.
\end{coro}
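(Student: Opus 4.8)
The plan is to combine Theorem \ref{main}(2) with the inequality $\mu(f)\geq\bar\mu(f)$ of Melle--Wall \cite{Melle-Wall}, reducing the tameness statement to the equality $\mu(f)=\mu(\mathcal N_f)$ for strongly non-degenerate $f$. The key observation is that Kouchnirenko's theorem gives, for a \emph{convenient} power series $f$ that is non-degenerate \emph{in the strong sense}, exactly $\mu(f)=\mu(\mathcal N_f)$; indeed the Newton number $\mu(\mathcal N_f)$ as defined in \eqref{convenient} is precisely Kouchnirenko's combinatorial number, and strong non-degeneracy is the hypothesis under which his bound $\mu(f)\geq\mu(\mathcal N_f)$ becomes an equality. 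Since a strongly non-degenerate series is in particular non-degenerate, Theorem \ref{main}(2) already gives $\bar\mu(f)=\mu(\mathcal N_f)$, and chaining the two equalities with Melle--Wall's $\mu(f)\geq\bar\mu(f)$ closes the argument: $\mu(f)\geq\bar\mu(f)=\mu(\mathcal N_f)=\mu(f)$, forcing $\mu(f)=\bar\mu(f)$.

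First I would dispose of the convenient case as above. Then, for a general reduced strongly non-degenerate $f$, I would need to check that one may reduce to the convenient case without changing either side. Write $f=x^{d_1}y^{d_2}g$ with $g$ convenient as in the discussion preceding Lemma \ref{Lenarcik1}; one should verify that $g$ is again strongly non-degenerate (its Newton polygon is a translate of a sub-polygon of $\mathcal N_f$ and the relevant $\ini(g,S)$ agree with $\overline\ini(f,S)$ up to a monomial), and that both $\mu(\cdot)=\bar\mu(\cdot)$ for the monomial factors $x,y$ is trivial while Proposition \ref{prop:1}(3) governs how $\bar\mu$ behaves under the factorization. Alternatively, and more cleanly, one can use that for $m$ large the perturbation $f_m=f+x^m+y^m$ is convenient, is strongly non-degenerate for $m$ large enough (the new edges contribute monomials $x^m$, $y^m$ whose associated $\overline\ini$ is a unit, hence trivially reduced with no singular points off the axes), satisfies $\mu(\mathcal N_{f_m})=\mu(\mathcal N_f)$ by \eqref{mueverybody} for $m$ large, and has $\mu(f_m)=\mu(f)$ and $\bar\mu(f_m)=\bar\mu(f)$ for $m$ large by semicontinuity/stability of these invariants under high-order perturbation.

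The main obstacle will be justifying the last stabilization step rigorously in positive characteristic, i.e.\ that $\mu(f_m)=\mu(f)$ and $\bar\mu(f_m)=\bar\mu(f)$ once $m\gg 0$. The equality $\bar\mu(f_m)=\bar\mu(f)$ follows because adding terms of order $\geq m$ does not change the equisingularity type of the (non-convenient) branches along the axes once $m$ exceeds the relevant contact exponents, so $\delta$ and $r$ are unchanged; this can be made precise using the semigroup description in Section \ref{requisitos}. The equality $\mu(f_m)=\mu(f)$ is the delicate point, since $\mu$ is not an analytic invariant in characteristic $p$: here one argues that $\frac{\partial f}{\partial x},\frac{\partial f}{\partial y}$ already generate an ideal of finite colength $\mu(f)$, that this colength is $\geq \bar\mu(f)$, and that adding $mx^{m-1}$, $my^{m-1}$ to the partials for $m\gg0$ does not decrease the colength — using that the axes are not among the branches of $f$ only after the monomial factors have been split off, which is exactly why passing through the convenient reduction is safer. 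If that stabilization proves awkward to cite, the fallback is the direct factorization argument via Lemma \ref{Lenarcik} and Proposition \ref{prop:1}(3) together with Kouchnirenko applied to the convenient factor $g$, handling the extra $x^{d_1}y^{d_2}$ by the elementary identities $\mu(xh)=\bar\mu(xh)\iff\mu(h)=\bar\mu(h)$ and similarly for $y$, which hold because multiplying by a smooth transverse branch adds a controlled amount to $\mu$ and the same amount to $\bar\mu$.
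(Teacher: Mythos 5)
Your proposal is correct and follows essentially the same route as the paper: combine Kouchnirenko's planar theorem (giving $\mu(f)=\mu(\mathcal N_f)$ for strongly non-degenerate $f$) with Theorem \ref{main}(2) (giving $\bar\mu(f)=\mu(\mathcal N_f)$). The detour through Melle--Wall's inequality and the lengthy stabilization/convenient-reduction discussion is unnecessary, since the version of Kouchnirenko's theorem the paper cites (\cite[Proposition 4]{BGreuel}) already applies to reduced, not necessarily convenient, power series with $\mu(\mathcal N_f)$ as defined in \eqref{mueverybody}.
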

\begin{proof}
By Kouchnirenko's planar theorem  \cite[Proposition 4]{BGreuel} we have $\mu(f)=\mu(\mathcal N_f)$. On the other hand by Theorem \ref{main} we get $\bar \mu(f)=\mu(\mathcal N_f)$. Therefore $\mu(f)=\bar \mu(f)$.
\end{proof}

\section{Proof of the main result}\label{proof}
We begin with the proof of Theorem \ref{main} for convenient power series. Firstly we consider the case of elementary power series.  Let $f\in K[[x,y]]$ be an elementary power series of bidegree $(m,n)$. Let $d:=\gcd(m,n)$. Then the theorem reduces to the following statement:
\begin{equation}\label{thirred}
\bar \mu (f)- (n-1)(m-1)\geq d-r(f)\geq 0.
\end{equation}
If $f$ is non-degenerate then $\bar \mu (f)=(n-1)(m-1)$ and $r(f)=d$.\\

We distinguish two cases. Suppose first that $f$ is irreducible, that is $r(f)=1$. 

\begin{lema}
\label{cota}
Let $f\in K[[x,y]]$ be irreducible with semigroup of values $\Gamma(f)=\langle v_0,v_1,\ldots,v_h\rangle$. If $c$ is the conductor of $\Gamma(f)$ then $c\geq (v_0-1)(v_1-1)+\gcd(v_0,v_1)-1$. The equality $c=(v_0-1)(v_1-1)$ holds if and only if $\gcd(v_0,v_1)=1$.
\end{lema}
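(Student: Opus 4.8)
The plan is to argue entirely with the value semigroup $\Gamma(f)=\langle v_0,\dots,v_h\rangle$, relying on the formula $c=\sum_{k=1}^{h}(n_k-1)v_k-v_0+1$ recalled above together with two standard facts about semigroups of plane branches: $n_k\ge 2$ for $1\le k\le h$, and the strict inequalities $v_{k+1}>n_kv_k$ for $1\le k\le h-1$. First I would record the multiplicative identities $\prod_{k=1}^{h}n_k=e_0/e_h=v_0$ and $\prod_{k=2}^{h}n_k=e_1/e_h=e_1=\gcd(v_0,v_1)$, and set $d:=\gcd(v_0,v_1)$, $P_k:=n_1n_2\cdots n_{k-1}$ (so $P_2=n_1$, $P_{h+1}=v_0$).

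The heart of the proof is a sharp lower bound for each generator in terms of $v_1$. Introduce the auxiliary integers $R_2:=1$ and $R_{k+1}:=n_kR_k+1$. Starting from $v_2\ge n_1v_1+1$ and iterating $v_{k+1}\ge n_kv_k+1$, a short induction gives $v_k\ge P_kv_1+R_k$ for $2\le k\le h$. Substituting these into the conductor formula and using the telescoping identities $\sum_{k=2}^{h}(n_k-1)P_k=P_{h+1}-P_2=v_0-n_1$ and $\sum_{k=2}^{h}(n_k-1)R_k=R_{h+1}-R_2-(h-1)=R_{h+1}-h$, everything collapses to
\[
c\;\ge\;(v_0-1)(v_1-1)+R_{h+1}-h .
\]
Expanding the recursion, $R_{h+1}=1+n_h+n_hn_{h-1}+\cdots+n_hn_{h-1}\cdots n_2$ is a sum of $h$ positive integers whose largest summand equals $\prod_{k=2}^{h}n_k=d$; hence $R_{h+1}\ge d+(h-1)$, giving $c\ge(v_0-1)(v_1-1)+d-1$, as required. (The case $h=1$ is immediate and I would check it separately: then all the sums above are empty, $n_1=v_0$, and the formula yields $c=(v_0-1)v_1-v_0+1=(v_0-1)(v_1-1)$, consistently with $d=1$.)

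For the equality assertion I would use the inequality just obtained. If $\gcd(v_0,v_1)=d\ge 2$, then $c\ge(v_0-1)(v_1-1)+1>(v_0-1)(v_1-1)$, so equality fails. Conversely, if $\gcd(v_0,v_1)=1$ then $e_1=1$; since $e_0>e_1>\cdots>e_h=1$ strictly (this is the one place $n_k\ge 2$ is needed), we must have $h=1$, and then the conductor formula gives $c=(v_0-1)(v_1-1)$ as above.

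The step I expect to be the real obstacle is the choice of the auxiliary sequence $R_k$: the naive bound $v_k\ge P_kv_1$ (that is, $R_k\ge 1$) is \emph{not} sufficient — it only yields $c\ge(v_0-1)(v_1-1)+\sum_{k=2}^{h}(n_k-1)$, and one checks $\sum_{k=2}^{h}(n_k-1)\le d-1$ with equality only when $h\le 2$, so that approach would prove the inequality for at most two characteristic exponents. One genuinely needs the iterated estimate with the exact recursion $R_{k+1}=n_kR_k+1$ and the telescoping that turns $\sum_{k=2}^{h}(n_k-1)R_k$ into $R_{h+1}-h$; once these bookkeeping identities are set up correctly, and the degenerate case $h=1$ is dispatched, the rest is purely formal.
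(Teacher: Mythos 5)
Your proof is correct, but it follows a genuinely different route from the paper's. The paper passes to the Puiseux characteristic sequence $b_0<b_1<\cdots<b_h$ (with $b_0=v_0$, $b_1=v_1$, $\gcd(b_0,\ldots,b_k)=e_k$) and uses the conductor formula $c=\sum_{k=1}^h(e_{k-1}-e_k)(b_k-1)$; the whole estimate then reduces to the two crude bounds $b_k\geq b_2$ for $k\geq 2$ and $b_2-b_1\geq 1$, after which the sum over $k\geq 2$ telescopes in the $e$'s to give $(e_1-1)(b_2-1)$ and the bound follows in three lines. You instead stay with the minimal generators $v_k$ and the formula $c=\sum_{k=1}^h(n_k-1)v_k-v_0+1$, and extract the correction term $\gcd(v_0,v_1)-1$ from the iterated strict inequalities $v_{k+1}\geq n_kv_k+1$ via the auxiliary sequence $R_{k+1}=n_kR_k+1$ and two telescoping identities; the lower bound $R_{h+1}\geq e_1+(h-1)$ (the largest summand of $R_{h+1}$ being $\prod_{k=2}^h n_k=e_1$) then delivers exactly the same constant. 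Both arguments rest on the same standard structure theory of plane branch semigroups in arbitrary characteristic ($n_k\geq 2$ and $n_kv_k<v_{k+1}$, versus monotonicity of the $b_k$); the paper's version is shorter because the $b_k$'s absorb the bookkeeping that your recursion for $R_k$ has to carry explicitly, while yours has the mild advantage of never leaving the generators $v_k$ and of making visible exactly where the inequality can be strict. Your treatment of the equality statement (strictness when $\gcd(v_0,v_1)\geq 2$, and $e_1=1\Rightarrow h=1$ in the other direction) matches the paper's logic. All your intermediate identities ($\sum_{k=2}^h(n_k-1)P_k=v_0-n_1$, $\sum_{k=2}^h(n_k-1)R_k=R_{h+1}-h$) check out, as does the separate verification of the case $h=1$.
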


\noindent \begin{proof}
Let us define Puiseux characteristic sequence $b_0, b_1,\ldots, b_h$ by putting $b_0=v_0$, $b_k=v_k-\sum_{i=1}^h(n_i-1)v_i$ for $k\in \{1,\ldots,h\}.$ Note that $\gcd(b_0,\ldots,b_k)=e_k$ for $k\in \{0,\ldots,h\}$ and $b_0<b_1<\cdots<b_h$. Moreover $c=\sum_{k=1}^h(e_{k-1}-e_k)(b_k-1)$ (see for example \cite[Chapter 3, p. 58]{Pfister-Ploski}. If $e_1=1$ then $c=(e_0-e_1)(b_1-1)=(b_0-1)(b_1-1)$. Therefore we may assume that $h>1$. We have
\begin{eqnarray*}
c&=&(e_0-e_1)(b_1-1)+\sum_{k=2}^h(e_{k-1}-e_k)(b_k-1)\\
&\geq & (e_0-e_1)(b_1-1)+\sum_{k=2}^h(e_{k-1}-e_k)(b_2-1)\\
&=&(e_0-e_1)(b_1-1)+(e_1-1)(b_2-1)\\
&=& (e_0-e_1)(b_1-1)+(e_1-1)(b_2-b_{1}+b_{1}-1)\\
&=& (e_0-1)(b_1-1)+(e_1-1)(b_2-b_{1})\\
&\geq& (b_0-1)(b_1-1)+e_1-1,\;\;\;\hbox{\rm since } b_2-b_1\geq 1.\\
\end{eqnarray*}
\end{proof}

Suppose that $r(f)=1$. Since $\bar \mu (f)=c(f)=c$ we have, by Lemma \ref{cota}, $\bar \mu (f)\geq (v_0-1)(v_1-1)+\gcd(v_0,v_1)-1$. The power series $f$ being unitangent we have $m=\ord f(0,y)=\ord f$ or $n=\ord f(x,0)=\ord f$. Assume that $m=\ord f$. Then $m<n\leq v_{1}$ (see \cite{GB-P2019}).
If the axis $y=0$ has maximal contact with the curve $f(x,y)=0$ then $n=v_1$ and by Lemma \ref{cota} we get  
\[
\bar \mu (f)\geq (v_{0}-1)(v_{1}-1)-\gcd(v_{0},v_{1})+1=(m-1)(n-1)-d+1\geq 0.
\]

If $n<v_{1}$ then $n\equiv 0$ (mod $m$), $d=\gcd(m,n)=m$ and we get

\begin{eqnarray*}
\bar \mu(f)&\geq &(v_0-1)(v_1-1)=(m-1)(v_{1}-n+n-1)\\
&=&(m-1)(n-1)+(v_{1}-1)(m-1)\\
&\geq &(m-1)(n-1)+m-1=
(m-1)(n-1)+d-1.
\end{eqnarray*}

\noindent Suppose that $f$ is non-degenerate. Then, by Corollary \ref{bi}, $d=\gcd(n,m)=1$. Consequently, by Lemma \ref{cota}, $\bar \mu(f)=(m-1)(n-1)+d-1$.

Suppose now that $f$ is elementary but $r(f)>1$. Recall that any irreducible convenient power series is elementary.

\begin{lema}\label{lemma:elementary}
Let $f$ be an elementary power series with $\bideg(f)=(m,n)$ and $f=f_1\cdots f_r$ the factorization of $f$ into irreducible factors with  $\bideg(f_i)=(m_i,n_i)$. If $d=\gcd(m,n)$ and $d_i=\gcd(m_i,n_i)$ then
\begin{enumerate}
\item $\frac{m_i}{d_i}=\frac{m}{d}$ and $\frac{n_i}{d_i}=\frac{n}{d}$ for any $i\in \{1,\ldots,r\}$.
\item $\sum_{i=1}^r d_i=d$.
\end{enumerate}
Moreover, $r\leq d$ with equality if $f$ is non-degenerate.
\end{lema}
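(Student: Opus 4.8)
The plan is to reduce everything to the multiplicativity of the Newton data under products of \emph{elementary} power series having the same slope, as recorded in Section~\ref{Newtonp}. First I would note that each irreducible factor $f_i$ of $f$ is itself elementary: $f$ convenient forces $x\nmid f$ and $y\nmid f$, hence $x\nmid f_i$ and $y\nmid f_i$, so $f_i$ is convenient; being also irreducible it is elementary, say $\mathcal N_{f_i}=\{S_i\}$, with $(m_i,n_i)=(\ord f_i(x,0),\ord f_i(0,y))$ and $m_i,n_i\ge 1$. The next step is to see that all the $f_i$ share the slope of the single edge $S$ of $\mathcal N_f$. I would group the factors by the value $I(f_i)=m_i/n_i$ exactly as in the proof of Lemma~\ref{Newtonfact}: if the distinct values were $\omega_1<\cdots<\omega_s$, the corresponding partial products would be elementary and would produce $s+1$ distinct vertices of $\mathcal N_f$, so $\mathcal N_f$ would have at least $s$ edges; since $f$ is elementary, $s=1$, i.e.\ $m_i/n_i=m/n$ for every $i$. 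The fractions $m_i/n_i$ and $m/n$ are then equal, each in lowest terms (by the definitions of $d_i$ and $d$), so $m_i/d_i=m/d$ and $n_i/d_i=n/d$, which is part~(1). For part~(2), multiplicativity of the bidegree for same-slope elementary factors gives $m=\sum_i m_i$ and $n=\sum_i n_i$; substituting $m_i=(m/d)d_i$ and dividing by $m\ge 1$ gives $\sum_{i=1}^r d_i=d$, and since each $d_i\ge 1$ this forces $r\le d$.

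For the equality case, assume $f$ is non-degenerate. Using $\Ini(f_1\cdots f_r)=\Ini f_1\cdots\Ini f_r$ (valid since all factors share a slope, by part~(1)) together with the fact that each $\overline\ini(f_i,S_i)$ is convenient, I would compare monomial parts to conclude that $\overline\ini(f,S)=\prod_{i=1}^r\overline\ini(f_i,S_i)$ up to a nonzero constant. Non-degeneracy of $f$ means $\overline\ini(f,S)$ is reduced; as $K[[x,y]]$ is a UFD, every divisor of a reduced element is reduced, so each $\overline\ini(f_i,S_i)$ is reduced, i.e.\ each $f_i$ is non-degenerate. Corollary~\ref{bi} then gives $d_i=\gcd(m_i,n_i)=1$ for all $i$, whence $r=\sum_{i=1}^r d_i=d$.

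The step I expect to demand the most care is the slope-matching: one must be sure a one-edge Newton polygon genuinely forbids the irreducible factors from having different slopes. This is not purely formal, but the needed combinatorics is already carried out in the proof of Lemma~\ref{Newtonfact} — the partial products corresponding to distinct slopes give genuinely distinct vertices of $\mathcal N_f$ — so it should transfer with essentially no new argument. Everything else is bookkeeping with reduced fractions, plus the elementary remark that a divisor of a reduced power series is reduced.
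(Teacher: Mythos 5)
Your proof is correct, and it follows the same overall skeleton as the paper's: each irreducible factor is convenient, hence elementary; the bidegrees add up to give $\sum_i d_i=d$ and $r\le d$; and the non-degenerate case reduces to Corollary \ref{bi} via the observation that a divisor of a reduced power series is reduced. The one genuinely different ingredient is how you establish part (1). The paper argues algebraically with Lemma \ref{homo}: $\Ini f=F(x^{m/d},y^{n/d})$ with $F$ homogeneous of degree $d$, each $\Ini f_i=(a_ix^{m_i/d_i}+b_iy^{n_i/d_i})^{d_i}$, and matching irreducible factors in $\Ini f=\prod_i\Ini f_i$ forces $m_i/d_i=m/d$ and $n_i/d_i=n/d$ in one stroke. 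You instead argue combinatorially, re-running the vertex count from the proof of Lemma \ref{Newtonfact} to rule out distinct values of $m_i/n_i$ (which would create extra edges of $\mathcal N_f$), and then pass to lowest terms; this avoids invoking the multiplicativity of $\Ini$ until the non-degeneracy step, where you in fact supply more detail than the paper, which simply asserts that the $f_i$ are non-degenerate. Both routes are sound; one small improvement to yours is that $\overline\ini(f,S)=\prod_i\overline\ini(f_i,S_i)$ holds exactly, not merely up to a nonzero constant, since a product of convenient power series is convenient.
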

\noindent \begin{proof}
By Lemma \ref{homo} we have $\Ini f(x,y)=F(x^{m/d},y^{n/d})$ for some homogeneous polynomial $F$ of degree $d$.
Since $f_i$ are elementary $\Ini f(x,y)=\Ini f_1(x,y) \cdots \Ini f_r(x,y)$. By Lemma \ref{homo} $\Ini f_i(x,y)=(a_ix^{\frac{m_i}{d_i}}+b_iy^{\frac{n_i}{d_i}})^{d_i}$ for some $a_i,b_i\in K$. Then $a_ix^{\frac{m_i}{d_i}}+b_iy^{\frac{n_i}{d_i}}$ is an irreducible factor of $F(x^{m/d},y^{n/d})$, which implies $\frac{m_i}{d_i}=\frac{m}{d}$ and $\frac{n_i}{d_i}=\frac{n}{d}$ for any $i\in \{1,\ldots,r\}$. Since $f(x,0)=\prod_{i=1}^r f_i(x,0)$ we have $m=\ord f(x,0)=\sum_{i=1}^r \ord f_i(x,0)=\sum_{i=1}^r m_i=\sum_{i=1}^r d_i \frac{m}{d}$ whence $\sum_{i=1}^r d_i=d$. Obviously $r\leq d$. If $f$ is non-degenerate then $f_i$ are non-degenerate and $d_i=1$ for $i\in \{1,\ldots,r\}$ by Corollary \ref{bi}.
Therefore $r=d$.
\end{proof}

\medskip

By the third statement of Proposition \ref{prop:1} we get
$\bar \mu (f)+r-1=\sum_{i=1}^{r}\bar \mu (f_{i})+2\sum_{1\leq i<j\leq r}i_{0}(f_{i},f_{j})$. By the irreducible elementary case we have $\bar \mu (f_i)\geq \left(\dfrac{m}{d}d_i-1\right) \left(\dfrac{n}{d}d_i-1\right)+(d_i-1).$ Moreover, by Lemma \ref{intmul},  $i_0(f_i,f_j)\geq \dfrac{mn}{d^2}d_{i}d_{j}$. Therefore we get

\begin{eqnarray*}
\bar \mu(f)+r-1&\geq& \sum_{i=1}^{r}\left[\left( \frac{m}{d}d_{i}-1\right)  \left( \frac{n}{d}d_{i}-1\right) +(d_{i}-1) \right] +2 \sum_{1\leq i<j\leq r}\frac{mn}{d^{2}}d_{i}d_{j}\\
&=&\frac{mn}{d^2}\left(\sum_{i=1}^r d_i^2+2\sum_{1\leq i<j\leq r}d_id_j\right)+\left(\frac{-n-m}{d}+1\right)\sum_{i=1}^r d_i\\
&=&mn-n-m+d.
\end{eqnarray*}

Whence $\bar \mu(f)+r-1\geq(n-1)(m-1)+d-1$ which implies the inequality \eqref{thirred}. If $f$ is non-degenerate then $d_i=1$ for $i\in\{1,\ldots,r\}$, $\bar \mu(f_i)=\left(\frac{m}{d}-1\right)\left(\frac{n}{d}-1\right)$ and $i_0(f_i,f_j)=\frac{mn}{d^2}$ and the inequalities become equalities. Moreover, $r(f)=r=d$ by Lemma \ref{lemma:elementary}.\\

Let us prove now the general case, that is $\sigma:=\sharp \mathcal N_f>1$. Let $f=\prod_{S\in \mathcal N_f}f_S$ be the Newton factorization  of $f$. By the third statement of Proposition \ref{prop:1} we get
\[
\bar \mu(f)+\sigma-1=\sum_{S\in \mathcal N_f}\bar \mu(f_S)+\sum_{S\neq T}i_0(f_S,f_T)=\sum_{S\in \mathcal N_f}\bar \mu(f_S)+\sum_{S\neq T}[S,T],
\]
where $S$ and $T$ are not parallel. Since $f_S$ is elementary of bidegree $(\vert S\vert_1,\vert S\vert_2)$ we get
\[
\bar \mu(f_S)\geq (\vert S\vert_1-1)(\vert S\vert_2-1)+\gcd(\vert S\vert_1,\vert S\vert_2)-r(f_S).
\]

A simple calculation shows that
\[
\bar \mu(f)+\sigma-1\geq [\mathcal N_f,\mathcal N_f]-\vert \mathcal N_f\vert_1-\vert \mathcal N_f\vert_2+\sigma+r(\mathcal N_f)-r(f).
\]

Therefore $\bar \mu(f)\geq \mu(\mathcal N_f)+r(\mathcal N_f)-r(f)$. If $f$ is non-degenerate then $f_S$ is non-degenerate. Thus $\bar \mu(f_S)=\mu(\mathcal N_{f_S})$ and $r(f_S)=r(\mathcal N_{f_S})=\gcd(\vert S\vert_1,\vert S\vert_2)$. Using the Newton factorization we get $\overline \mu(f)=\mu(\mathcal N_{f})$. Obviously $r(f)=\sum_S r(f_S)=\sum_S \gcd (\vert S\vert_1,\vert S\vert_2)=r(\mathcal N_{f})$.\\

It remains to prove Theorem \ref{main} for non-convenient power series. \\

Let $f(x,y)=x^{d_{1}}y^{d_{2}}g(x,y)$ where $g=g(x,y)$ is a convenient reduced power series or a unit. We assume that $g(0,0)=0$ (if $g(0,0)\neq 0$  then $\overline \mu(f)=\mu(\mathcal N_f)$ and $r(f)=\mu(\mathcal N_f)$).\\

Because the length of a segment is the same on parallel axes we have

\begin{equation}
\vert \mathcal N_{f}\vert_i=\vert \mathcal N_{g}\vert_i \;\;\hbox{\rm for $i=1,2$, $[\mathcal N_{f},\mathcal N_{f}]=
[\mathcal N_{g},\mathcal N_{g}]$ and $r(\mathcal N_{f})-r(f)=r(\mathcal N_{g})-r(g)$}.
\end{equation}

Since we have already proved Theorem \ref{main} for convenient power series we get

\begin{equation}
\label{Th2:eq1}
\overline \mu (g)-\mu(\mathcal N_g)\geq r(\mathcal N_g)-r(g)\geq 0,
\end{equation}

\noindent and the equalities $\overline \mu (g)=\mu(\mathcal N_g)$ and $r(\mathcal N_g)=r(g)$ holding for non-degenerate $g$.

By Proposition \ref{prop:1}   we get
\begin{eqnarray*}
\bar \mu(f)+2&=&\bar \mu (x)+\bar \mu (y)+\bar \mu(g)+2i_0(g,x)+2i_0(g,y)+2i_0(x,y)\\
&=&\bar \mu(g)+2\ord g(0,y)+2\ord g(x,0)+2,
\end{eqnarray*}

and 
\begin{eqnarray*}
\bar \mu(f)&=&\bar \mu(g)+2\vert \mathcal N_g \vert_{1}+2\vert \mathcal N_g \vert_{2}\\
&\geq &\mu(\mathcal N_{g})+r(\mathcal N_{g})-r(g)+2\vert \mathcal N_g \vert_{1}+2\vert \mathcal N_g \vert_{2}\\
&=&[\mathcal N_g,\mathcal N_g]+\vert \mathcal N_g \vert_1+\vert \mathcal N_g\vert_2+1+r(\mathcal N_{g})-r(g)\\
&=&[\mathcal N_f,\mathcal N_f]+\vert \mathcal N_f \vert_1+\vert \mathcal N_f \vert_2+1+r(\mathcal N_{f})-r(f)\\
&\geq &\mu(\mathcal N_{f})+r(\mathcal N_{f})-r(f).
\end{eqnarray*}

If $f$ is non-degenerate then $g$ is non-degenerate and we get $\bar \mu(f)= \mu(\mathcal N_{f})$ and $r(\mathcal N_{f})=r(f)$.

\medskip
\noindent
{\small Evelia Rosa Garc\'{\i}a Barroso\\
Departamento de Matem\'aticas, Estad\'{\i}stica e I.O. \\
Secci\'on de Matem\'aticas, Universidad de La Laguna\\
Apartado de Correos 456\\
38200 La Laguna, Tenerife, Espa\~na\\
e-mail: ergarcia@ull.es}

\medskip

\noindent {\small Arkadiusz P\l oski\\
Department of Mathematics and Physics\\
Kielce University of Technology\\
Al. 1000 L PP7\\
25-314 Kielce, Poland\\
e-mail: matap@tu.kielce.pl}

\end{document}